\newcommand{\arxiv}[1]{\href{http://arxiv.org/abs/#1}{\texttt{arXiv:#1}}}
\theoremstyle{plain}
\newtheorem{theorem}{Theorem}[section]
\newtheorem{lemma}[theorem]{Lemma}
\newtheorem{corollary}[theorem]{Corollary}
\newtheorem{proposition}[theorem]{Proposition}
\theoremstyle{definition}
\newtheorem{definition}[theorem]{Definition}
\newtheorem{example}[theorem]{Example}
\theoremstyle{remark}
\title{\bf The Redei-Berge function in noncommuting variables }
\author{Stefan Mitrovi\'c\\
\small Faculty of Mathematics\\[-0.8ex]
\small University of Belgrade\\[-0.8ex]
\small Serbia\\
\small\tt stefan.mitrovic@matf.bg.ac.rs\\
}
\begin{document}

\maketitle

\begin{abstract}
Recently, Stanley and Grinberg introduced a symmetric function associated to digraphs, called the Redei-Berge symmetric function. This function, however, does not satisfy the deletion-contraction property, which is a very powerful tool for proving various identities using induction. In this paper, we introduce an analogue of this function in noncommuting variables which does have such property. Furthermore, it specializes to the ordinary Redei-Berge function when the variables are allowed to commute. This modification allows us to further generalize properties that are already proved for the original function and to deduce many new ones.

\bigskip\noindent \textbf{Keywords}: digraph, Redei-Berge symmetric function, deletion-contraction, noncommutative symmetric function

\small \textbf{MSC2020}: 05C20, 05E05
\end{abstract}

\section{Introduction}

In 2022, Stanley and Grinberg defined in \cite{S} a symmetric function associated to digraphs and named it the Redei-Berge function, in honor of two mathematicians, whose results about the number of Hamiltonian paths in a digraph they managed to deduce in a new way - using the theory of symmetric functions. Berge's theorem says that if $X$ is a simple digraph and $\overline{X}$ is its complement, then the number of Hamiltonian paths of $\overline{X}$ is congruent to the same number for $X$ modulo $2$ \cite{B}. Redei's theorem says that if $X$ is a tournament, then the number of Hamiltonian paths of $X$ is odd \cite{LR}.

The first version of the Redei-Berge function appeared in 1996, in Chow's paper \cite{C}, and later in Wiseman's paper \cite{W}. In \cite{GS}, it is shown that this function is the image of the isomorphism class of digraph under a certain canonical Hopf algebra morphism. This reconceptualization is used to prove various properties of the Redei-Berge function, including the deletion-contraction property for its principal evaluation - the Redei-Berge polynomial. Unfortunately, unlike its principal evaluation, the Redei-Berge function does not have the deletion-contraction property. However, the authors in \cite{MS} gave some decomposition techniques that could serve as a replacement of this property.

In this paper, inspired by \cite{SG}, we define an analogue of this symmetric function in noncommuting variables. The reason for not letting the variables commute is the same as in \cite{SG}, where the authors managed to deduce many results about the original chromatic function by observing its analogue in noncommuting variables. Namely, although this modification may seem like an additional complication, it allows us to keep track of each vertex, which makes it possible for us to obtain the deletion-contraction property. This is a powerful tool for proving various identities and for deriving generalizations of results about the original Redei-Berge function from \cite{S}, \cite{GS} and \cite{MS} using induction.

\section{Preliminaries}

The algebra of quasisymmetric functions consists of formal power series of bounded degree in variables $x_1, x_2, \ldots$ with coefficients in $\mathbb{Q}$ such that the coefficient of the monomial $x_1^{a_1}x_2^{a_2}\cdots x_k^{a_k}$ is equal to the coefficient of the monomial $x_{i_1}^{a_1}x_{i_2}^{a_2}\cdots x_{i_k}^{a_k}$ whenever $i_1<i_2<\cdots<i_k$. Fundamental quasisymmetric functions given by \begin{equation}\label{fundamental}
F_I=\sum_{\substack{1\leq i_1\leq i_2\leq\cdots\leq i_n\\
                  i_j<i_{j+1} \ \mathrm{for \ each} \ j\in I}}x_{i_1}x_{i_2}\cdots x_{i_n}, \quad I\subseteq[n-1]
\end{equation} form a basis of this algebra. For basics of quasisymmetric functions, see \cite{EC}.

The algebra of symmetric functions consists of quasisymmetric functions that are invariant under the action of permutations on the set of variables. As a vector space, it has many natural bases, see \cite{EC}. One such basis of this space consists of monomial symmetric functions. For partition $\lambda=(\lambda_1, \lambda_2, \ldots, \lambda_n)$ of some integer, monomial symmetric function $m_\lambda$ is defined as
\[m_{\lambda}=\sum x_{i_1}^{\alpha_1}x_{i_2}^{\alpha_2}\cdots x_{i_n}^{\alpha_n},\]
where the sum runs over all distinct permutations $\alpha=(\alpha_1, \alpha_2, \ldots, \alpha_n)$ of $\lambda$ and over all sequences $i_1<i_2<\cdots<i_n$ of positive integers.

We will be particularly interested in the power sum basis. The $i$th power sum symmetric function is defined as
\[p_0=1 \hspace{5mm }\textrm{ and } \hspace{5mm}p_i=\sum_{j=1}^{\infty}x_j^i\]
for $i\geq 1$. For partition $\lambda=(\lambda_1, \lambda_2, \ldots, \lambda_k)$, we define 

\[p_{\lambda}=p_{\lambda_1}p_{\lambda_2}\cdots p_{\lambda_k}.\]

Finally, the third basis of our interest consists of elementary functions. The $i$th elementary symmetric function is given by 
\[e_0=1\hspace{5mm} \textrm{ and } \hspace{5mm}e_i=\sum_{j_1<j_2<\cdots<j_i}x_{j_1}x_{j_2}\cdots x_{j_i} \] for $i\geq 1$.
For partition $\lambda=(\lambda_1, \lambda_2, \ldots, \lambda_k)$, we define
\[e_{\lambda}=e_{\lambda_1}e_{\lambda_2}\cdots e_{\lambda_k}.\] 

A digraph $X$ is a pair $X=(V,E)$, where $V$ is a finite set and $E$ is a collection
$E\subseteq V\times V$. Elements $u\in V$ are vertices and
elements $(u,v)\in E$ are directed edges of the digraph $X$. If $V$ has $n$ elements, a $V$-listing is a list of all vertices with no repetitions, i.e. a bijective map $\sigma:[n]\rightarrow V$. We write $\Sigma_V$ for the set of all $V$-listings. For a $V$-listing $\sigma=(\sigma_1,\ldots,\sigma_n)\in\Sigma_V$, define the $X$-descent set as

\[X\mathrm{Des}(\sigma)=\{1\leq i\leq n-1 | (\sigma_i,\sigma_{i+1})\in E\}.\] 

Grinberg and Stanley associated to a digraph $X$ a generating function for $X$-descent sets and named it the Redei-Berge symmetric function, see \cite{S}.

\begin{equation}\label{descents}
U_X=\sum_{\sigma\in\Sigma_V}F_{X\mathrm{Des}(\sigma)}.
\end{equation}

If $X=(V, E)$ is a digraph, its complementary digraph is the digraph $\overline{X}=(V, (V\times V)\setminus E)$ and its opposite digraph is $X^{op}=(V, E')$, where $E'=\{(v, u) \ \mid \ (u, v)\in E\}$.

\begin{definition}
    Let $X=(V, E)$ be a digraph and let $\mathbb{S}_V$ be the group of permutations of $V$. Then, we define
    \[\mathbb{S}_V(X)=\{\sigma\in \mathbb{S}_V\mid \textrm{ each non-trivial cycle of }\sigma \textrm{ is a cycle of } X\},\]
    \[\mathbb{S}_V(X, \overline{X})=\{\sigma\in \mathbb{S}_V\mid \textrm{ each cycle of }\sigma \textrm{ is a cycle of } X \textrm{ or a cycle of }\overline{X}\}.\]
\end{definition}

 For a permutation $\sigma$, let $\mathrm{type}(\sigma)$ denote the partition whose entries are the lengths of the cycles of $\sigma$. 

\begin{theorem}\cite{S}\label{pbaza} If $X=(V, E)$ is a digraph, then \[U_X=\sum_{\sigma\in\mathbb{S}_V(X, \overline{X})}(-1)^{\varphi(\sigma)}p_{\mathrm{type}(\sigma)},\]
with $\varphi(\sigma)=\sum_{\gamma}(\ell (\gamma)-1)$, where the summation runs over all cycles $\gamma$ of $\sigma$ that are cycles in $X$ and $\ell(\gamma)$ denotes the length of the cycle $\gamma$. Consequently, $U_X$ is a symmetric function.
    \end{theorem}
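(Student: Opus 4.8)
The plan is to pass from the fundamental‑quasisymmetric definition of $U_X$ to an explicit monomial (``coloring'') expansion and to match it termwise against the monomial expansion of the claimed power‑sum side. Write $n=|V|$ and let $N(D)$ denote the number of directed Hamiltonian paths of a digraph $D$. Expanding each $F_{X\mathrm{Des}(\sigma)}$ by its definition \eqref{fundamental}, a monomial is recorded by the map $f\colon V\to\mathbb{Z}_{>0}$ with $f(\sigma_j)=i_j$; the constraints on $(i_1,\dots,i_n)$ say exactly that $f$ is weakly increasing along $\sigma$ and strictly increasing across every $X$-descent. First I would interchange the two summations and fix $f$: a listing $\sigma$ is compatible with $f$ iff it orders $V$ by nondecreasing $f$-value and, inside each level set $f^{-1}(c)$, uses an ordering no two consecutive vertices of which form an edge of $X$ (at a level jump the inequality is automatically strict, so only equal‑value neighbours are constrained). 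Such an inner ordering is precisely a directed Hamiltonian path of $\overline{X}$ restricted to $f^{-1}(c)$, whence
\[
U_X=\sum_{f\colon V\to\mathbb{Z}_{>0}}\Bigl(\prod_{c\ge 1} N\bigl(\overline{X}\,[\,f^{-1}(c)\,]\bigr)\Bigr)\prod_{v\in V}x_{f(v)},
\]
where $\overline{X}[W]$ is the induced subdigraph.

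For the right‑hand side I would expand each $p_{\mathrm{type}(\sigma)}=\prod_\gamma p_{\ell(\gamma)}$ as a sum over colourings $h\colon V\to\mathbb{Z}_{>0}$ constant on the cycles of $\sigma$, so that the coefficient of $\prod_v x_{h(v)}$ is $\sum_{\sigma}(-1)^{\varphi(\sigma)}$ over those $\sigma\in\mathbb{S}_V(X,\overline{X})$ whose cycles refine the level sets of $h$. Comparing with the display above, it suffices to prove, for each fixed $f$,
\[
\prod_{c\ge 1}N\bigl(\overline{X}[\,f^{-1}(c)\,]\bigr)=\sum_{\substack{\sigma\in\mathbb{S}_V(X,\overline{X})\\ f\ \mathrm{constant\ on\ cycles\ of}\ \sigma}}(-1)^{\varphi(\sigma)}.
\]
Since ``$f$ constant on cycles'' forces every cycle to lie in one level set, and since membership in $\mathbb{S}_V(X,\overline{X})$ and the exponent $\varphi$ are computed cycle by cycle, the right‑hand side factors over the level sets $W=f^{-1}(c)$ (using $\overline{X}[W]=\overline{X[W]}$). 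This reduces the whole theorem to a single block identity: for every digraph $H$ on a set $W$,
\[
N(\overline{H})=\sum_{\sigma\in\mathbb{S}_W(H,\overline{H})}(-1)^{\varphi(\sigma)}.\qquad(\ast)
\]

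The hard part is $(\ast)$, and here is how I would attack it. A Hamiltonian path of $\overline{H}$ is exactly a $W$-listing whose every consecutive pair is a non‑edge of $H$, so inclusion–exclusion over the positions forced to be $H$-edges gives $N(\overline{H})=\sum(-1)^{|W|-k}$ over ordered tuples $(P_1,\dots,P_k)$ of vertex‑disjoint directed $H$-paths covering $W$. I would then reorganise these covers: reading the $|\pi|!$ orderings of an unordered cover $\pi$ as the permutations $\rho$ of its blocks, and splicing the $H$-paths head‑to‑tail within each cycle of $\rho$ into a single cycle, yields a bijection from (cover, block‑permutation) pairs onto pairs $(\tau,K)$ where $\tau\in\mathbb{S}_W$ is arbitrary and $K$ is a set of ``connector'' edges of $\tau$ meeting every cycle and having all complementary edges in $H$; here $|K|$ equals the number of blocks, so the sign becomes $(-1)^{|W|-|K|}$. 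The payoff is that the sum over admissible $K$ now factors over the cycles of $\tau$, and on a single cycle of length $\ell$ it reduces (after forcing every $\overline{H}$-edge into $K$) to an alternating sum $\sum_{J}(-1)^{|J|}$ over subsets $J$ of the $H$-edges of that cycle. This sum evaluates to $(-1)^{\ell-1}$ when every edge of the cycle lies in $H$, to $1$ when every edge lies in $\overline{H}$, and to $0$ otherwise; hence only permutations all of whose cycles are $H$-cycles or $\overline{H}$-cycles survive, each with total weight $(-1)^{\varphi(\tau)}$, which is exactly the right‑hand side of $(\ast)$. I expect the delicate points to be the bijection to $(\tau,K)$ — in particular the ``at least one connector per cycle'' condition, which is what produces $(-1)^{\ell-1}$ rather than $0$ on pure $H$-cycles — and the bookkeeping of loops and fixed points, which always contribute a harmless factor $1$. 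Finally, having written $U_X$ as a $\mathbb{Q}$-linear combination of the symmetric functions $p_\lambda$, the ``consequently $U_X$ is symmetric'' clause is immediate.
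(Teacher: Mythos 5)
Your proposal is correct, but it takes a genuinely different route from the paper. The paper never proves this statement directly: it quotes it from \cite{S}, and its own related argument is the proof of the noncommutative generalization (Theorem \ref{razvoj}), by induction on the number of non-loop edges via relabeling and deletion--contraction $W_X=W_{X\setminus e}-W_{X/e}\uparrow$, with a term-by-term cancellation between the $\mathbb{S}_V(X\setminus e,\overline{X\setminus e})$ and $\mathbb{S}_V(X/e,\overline{X/e})$ sums and the discrete digraph as base case; the commutative statement then follows by letting the variables commute. You instead argue directly in commuting variables: your opening step rederives the friendly-listing expansion (this is exactly Theorem \ref{nature}, quoted from \cite{GS}, so you could simply cite it), you correctly observe that the coefficient of $\mathbf{x}_f$ on the left is $\prod_c N\bigl(\overline{X}[f^{-1}(c)]\bigr)$ while on the right the sum factors over level sets, reducing everything to the block identity
\[
N(\overline{H})=\sum_{\sigma\in\mathbb{S}_W(H,\overline{H})}(-1)^{\varphi(\sigma)},
\]
which you prove by inclusion--exclusion over ordered covers by disjoint $H$-paths, the splice bijection to pairs $(\tau,K)$, and a per-cycle evaluation. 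I checked the points you flag and they hold: cutting each cycle of $\tau$ at the connectors inverts the splice (the requirement that $K$ meet every cycle matches each $\rho$-cycle contributing at least one connector, and a fixed point's loop edge is always a connector), and the per-cycle sum is $\sum_J(-1)^{a-|J|}$ over subsets $J$ of the $a$ $H$-edges of the cycle, with $J\neq\emptyset$ forced exactly when the cycle is pure-$H$, yielding $(-1)^{\ell-1}$, $1$, or $0$ in the three cases, including length-$1$ cycles with or without loops, so only $\sigma\in\mathbb{S}_W(H,\overline{H})$ survive with weight $(-1)^{\varphi(\sigma)}$. Comparing the two: the paper's induction buys the finer noncommutative statement almost for free once deletion--contraction is in place, while your argument is self-contained, avoids induction entirely, and gives combinatorial meaning to the monomial coefficients as products of Hamiltonian-path counts of induced complements. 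One notational caution: you write ``the $|\pi|!$ orderings'' of a cover, which clashes with the paper's convention that $|\pi|$ denotes $r_1!r_2!\cdots r_n!$; say ``the $k!$ orderings of the $k$ blocks'' instead.
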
 

We say that a loopless digraph $X=(V, E)$ is a tournament if for every two distinct vertices $u, v\in V$ exactly one of $(u, v)$ and $(v, u)$ is an edge of $X$. Previous theorem is used in \cite{S} to prove the following corollary.

\begin{corollary}
    \cite{S} \label{turnir} If $X=(V, E)$ is a tournament, then \[U_X=\sum_{\substack{\sigma\in \mathbb{S}_V(X)\\ \textrm{all cycles of }\sigma \textrm{ have odd length}}}2^{\psi(\sigma)}p_{\mathrm{type}(\sigma)},\]
    where $\psi(\sigma)$ denotes the number of nontrivial cycles of $\sigma$.
\end{corollary}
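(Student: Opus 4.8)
The plan is to start from the $p$-expansion in Theorem~\ref{pbaza} and simplify it using the special structure of tournaments, by means of a sign-reversing involution followed by a grouping argument. Throughout I would fix a linear order on $V$. The two facts about tournaments I would isolate first are: (i) for distinct $u,v$ exactly one of $(u,v),(v,u)$ lies in $E$, so a directed cycle $\gamma=(v_1\cdots v_k)$ is a cycle of $\overline{X}$ if and only if its reverse $\gamma^{-1}$ is a cycle of $X$; and (ii) a transposition can be neither a cycle of $X$ nor a cycle of $\overline{X}$, since that would require both $(u,v)$ and $(v,u)$ to lie in $E$ (respectively, both to avoid $E$). Consequently every $\sigma\in\mathbb{S}_V(X,\overline{X})$ has no $2$-cycles, and reversing any single nontrivial cycle of such a $\sigma$ keeps it in $\mathbb{S}_V(X,\overline{X})$ while interchanging whether that cycle is counted as a cycle of $X$ or of $\overline{X}$.

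Next I would eliminate all permutations possessing an even cycle. Since there are no $2$-cycles, every even cycle has length $\ge 4$ and hence differs from its reverse. Given $\sigma$ with at least one even cycle, let $\gamma^{\ast}$ be the even cycle whose support contains the smallest vertex, and define $\iota(\sigma)$ by replacing $\gamma^{\ast}$ with $(\gamma^{\ast})^{-1}$. By (i) this is again in $\mathbb{S}_V(X,\overline{X})$, it preserves $\mathrm{type}(\sigma)$, and it is a fixed-point-free involution on the set of such $\sigma$. The crucial point is the sign: reversing $\gamma^{\ast}$ changes $\varphi(\sigma)$ by $\pm(\ell(\gamma^{\ast})-1)$, an odd number, so $(-1)^{\varphi}$ flips. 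Thus $\sigma$ and $\iota(\sigma)$ cancel, and after cancellation only those $\sigma\in\mathbb{S}_V(X,\overline{X})$ with all cycles of odd length survive. For these, each cycle of $X$ contributes an even $\ell(\gamma)-1$ to $\varphi(\sigma)$, so $(-1)^{\varphi(\sigma)}=1$ and we are left with $U_X=\sum p_{\mathrm{type}(\sigma)}$ over such $\sigma$.

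Finally I would reorganize this surviving sum as a sum over $\mathbb{S}_V(X)$. For an odd-cycle $\sigma\in\mathbb{S}_V(X,\overline{X})$, reverse every nontrivial cycle that is currently a cycle of $\overline{X}$; by (i) the resulting $\tau$ has all nontrivial cycles being cycles of $X$, so $\tau\in\mathbb{S}_V(X)$, still with all cycles odd and with $\mathrm{type}(\tau)=\mathrm{type}(\sigma)$. Conversely, starting from such a $\tau$ one may reverse any subset of its $\psi(\tau)$ nontrivial cycles (odd, hence of length $\ge 3$, hence distinct from their reverses) to recover all preimages: each reversed $X$-cycle becomes a legitimate $\overline{X}$-cycle, while fixed points remain cycles of $\overline{X}$. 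Hence each $\tau$ has exactly $2^{\psi(\tau)}$ preimages, all of the same type, which yields the claimed formula.

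I expect the main obstacle to be the bookkeeping that makes the even-cycle map a genuine involution --- specifically, guaranteeing a canonical, reversal-stable choice of $\gamma^{\ast}$ and ruling out the degenerate case $\gamma^{\ast}=(\gamma^{\ast})^{-1}$; this is exactly where fact (ii), the absence of $2$-cycles, is essential. The remaining verifications --- that reversal preserves membership in $\mathbb{S}_V(X,\overline{X})$ and preserves cycle type, and that the $2^{\psi}$ fibers are disjoint and exhaustive --- should be routine once (i) and (ii) are in place.
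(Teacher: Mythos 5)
Your proposal is correct, and it is essentially the intended proof: the paper itself omits the argument (deferring to \cite{S}), and the derivation there proceeds exactly as you do, starting from Theorem \ref{pbaza}, using that in a tournament no $2$-cycles can occur and that reversing a cycle of length $\geq 2$ toggles between a cycle of $X$ and a cycle of $\overline{X}$, then cancelling permutations with an even cycle and grouping the surviving all-odd permutations into fibers of size $2^{\psi(\tau)}$ over $\mathbb{S}_V(X)$. The only cosmetic difference is that your separate involution and grouping steps can be merged by summing signs over each fiber, $\prod_{\gamma}\bigl(1+(-1)^{\ell(\gamma)-1}\bigr)$, which vanishes unless all cycles are odd and otherwise equals $2^{\psi(\tau)}$.
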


The authors in \cite{GS} gave another interpretation of this function, which will be more convenient for defining our generalization.

\begin{definition} \label{prijateljski}
Let $X=(V,E)$ be a digraph. For a coloring of vertices with positive integers $f:V\rightarrow\mathbb{P}$, a $V$-listing $\sigma=(\sigma_1,\ldots,\sigma_n)\in\Sigma_V$ is called $(f,X)$-{\it friendly} if \[f(\sigma_1)\leq f(\sigma_2)\leq\cdots\leq f(\sigma_n) \ \text{and}\]  \[f(\sigma_j)<f(\sigma_{j+1}) \ \mathrm{for \ each} \ j\in[n-1] \ \mathrm{satisfying} \ (\sigma_j,\sigma_{j+1})\in E.\]
\end{definition}

Denote by $\Sigma_V(f,X)$ the set of all $(f,X)$-friendly $V$-listings and by $\delta_f:\Sigma_V\rightarrow\{0,1\}$ its indicator function. For a coloring $f:V\rightarrow\mathbb{P}$ we write $\mathbf{x}_f=\prod_{v\in V}x_{f(v)}$.

\begin{theorem}\cite{GS}\label{nature}
The Redei-Berge symmetric function $U_X$ of a digraph $X=(V, E)$ satisfies
\[U_X=\sum_{f:V\rightarrow\mathbb{P}}\sum_{\sigma\in\Sigma_V}\delta_f(\sigma)\mathbf{x}_f.\]
\end{theorem}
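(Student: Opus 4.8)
The plan is to start from the definition \eqref{descents} of $U_X$ and expand each fundamental quasisymmetric function $F_{X\mathrm{Des}(\sigma)}$ using \eqref{fundamental}, then reinterpret each resulting monomial as arising from a coloring $f$. Concretely, I would first write
\[
U_X=\sum_{\sigma\in\Sigma_V}F_{X\mathrm{Des}(\sigma)}=\sum_{\sigma\in\Sigma_V}\ \sum_{\substack{1\leq i_1\leq\cdots\leq i_n\\ i_j<i_{j+1}\ \text{for } j\in X\mathrm{Des}(\sigma)}}x_{i_1}x_{i_2}\cdots x_{i_n}.
\]

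Next, fixing a listing $\sigma=(\sigma_1,\ldots,\sigma_n)$, I would set up a correspondence between the index tuples $(i_1,\ldots,i_n)$ appearing in the inner sum and colorings $f:V\to\mathbb{P}$. Since a $V$-listing $\sigma$ is a bijection $[n]\to V$, the assignment $f(\sigma_k)=i_k$ (equivalently $f=(i_k)_k\circ\sigma^{-1}$) is a bijection between sequences $(i_1,\ldots,i_n)$ of positive integers and colorings $f$. Under this correspondence the monomial becomes $x_{i_1}\cdots x_{i_n}=\prod_{k=1}^n x_{f(\sigma_k)}=\prod_{v\in V}x_{f(v)}=\mathbf{x}_f$, so reindexing the variables by vertices rather than positions loses no information.

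The heart of the argument is then to check that the two constraints defining the inner sum match exactly the two $(f,X)$-friendliness conditions of Definition \ref{prijateljski}. The weak chain $i_1\leq\cdots\leq i_n$ is precisely $f(\sigma_1)\leq\cdots\leq f(\sigma_n)$, while the strict inequalities $i_j<i_{j+1}$ for $j\in X\mathrm{Des}(\sigma)$ are precisely the conditions $f(\sigma_j)<f(\sigma_{j+1})$ whenever $(\sigma_j,\sigma_{j+1})\in E$, by the very definition of the $X$-descent set. Hence a tuple $(i_1,\ldots,i_n)$ satisfies the constraints of the inner sum if and only if the corresponding $f$ makes $\sigma$ an $(f,X)$-friendly listing, i.e. $\delta_f(\sigma)=1$. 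Substituting, the inner sum over valid tuples becomes $\sum_{f:V\to\mathbb{P}}\delta_f(\sigma)\mathbf{x}_f$, and summing over all $\sigma\in\Sigma_V$ yields the claimed identity.

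I expect no substantive obstacle, as this proof is essentially a direct unwinding of the definitions of $F_I$, of $X\mathrm{Des}(\sigma)$, and of $(f,X)$-friendliness. The only points requiring care are confirming that the weak and strict inequality conditions of friendliness line up exactly with the two conditions governing the index tuples in $F_I$, and that $f\leftrightarrow(i_1,\ldots,i_n)$ is genuinely a bijection for each fixed $\sigma$; both follow immediately from the fact that a listing is a bijection between $[n]$ and $V$.
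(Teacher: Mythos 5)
Your proof is correct. The paper itself states Theorem \ref{nature} without proof, citing \cite{GS}, and your argument --- expanding each $F_{X\mathrm{Des}(\sigma)}$ via \eqref{fundamental}, transporting index tuples to colorings through the bijection $f(\sigma_k)=i_k$, and matching the weak/strict inequality constraints with the two conditions of Definition \ref{prijateljski} --- is precisely the standard definitional unwinding used there, with the only implicit step (harmlessly) being the exchange of the sums over $\sigma$ and $f$, which is legitimate since each monomial receives only finitely many contributions.
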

For digraphs $X=(V,E)$ and $Y=(V',E')$ we define the product $X\cdot Y$ as the digraph on the disjoint union $V\sqcup V'$ with the set of directed edges $E\cup E'\cup\{(u,v)\ |\ u\in V, v\in V'\}.$ The following properties will be generalized in our paper.

\begin{theorem}\label{opozit}
    \cite{GS} For any digraph $X$, \[U_X=U_{X^{op}}.\] Consequently, if $X$ is a tournament, then \[U_X=U_{\overline{X}}.\]
\end{theorem}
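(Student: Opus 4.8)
The plan is to prove $U_X = U_{X^{op}}$ through the power-sum expansion of Theorem \ref{pbaza}, taking the involution $\sigma\mapsto\sigma^{-1}$ on $\mathbb{S}_V$ as the governing bijection. The starting observation is that inverting a permutation reverses each of its cycles: a directed cycle $\gamma=(v_1,v_2,\ldots,v_\ell)$ of $\sigma$ becomes $\gamma^{-1}=(v_1,v_\ell,\ldots,v_2)$ in $\sigma^{-1}$, and the edges traversed by $\gamma^{-1}$ are exactly the reverses of those traversed by $\gamma$. Hence $\gamma$ is a cycle of $X$ (all its edges lie in $E$) if and only if $\gamma^{-1}$ is a cycle of $X^{op}$ (all its edges lie in the reversed set $E'$). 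Combining this with the easily checked identity $\overline{X^{op}}=(\overline{X})^{op}$, the same equivalence holds with the pair $(X,\overline{X})$ replaced by $(X^{op},\overline{X^{op}})$.

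First I would use this cycle correspondence to show that $\sigma\mapsto\sigma^{-1}$ restricts to a bijection between $\mathbb{S}_V(X,\overline{X})$ and $\mathbb{S}_V(X^{op},\overline{X^{op}})$: every cycle of $\sigma$ lies in $X$ or in $\overline{X}$ precisely when every cycle of $\sigma^{-1}$ lies in $X^{op}$ or in $\overline{X^{op}}$. Next I would check that the two statistics in Theorem \ref{pbaza} are preserved. Since inversion does not change cycle lengths, $\mathrm{type}(\sigma)=\mathrm{type}(\sigma^{-1})$. Moreover $\varphi(\sigma)$ sums $\ell(\gamma)-1$ over the cycles $\gamma$ of $\sigma$ that are cycles of $X$; by the correspondence these match, with equal lengths, the cycles of $\sigma^{-1}$ that are cycles of $X^{op}$, so the $X$-value of $\varphi$ at $\sigma$ equals the $X^{op}$-value of $\varphi$ at $\sigma^{-1}$ (trivial cycles contribute $0$ and may be ignored). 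Applying Theorem \ref{pbaza} to $X^{op}$ and re-indexing the sum by $\tau=\sigma^{-1}$ then carries the expansion of $U_{X^{op}}$ term-by-term onto that of $U_X$, giving $U_X=U_{X^{op}}$.

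For the tournament consequence I would first note that loops are invisible to $U$: because a $V$-listing has distinct consecutive entries, the pairs $(\sigma_i,\sigma_{i+1})$ tested in $X\mathrm{Des}(\sigma)$ in \eqref{descents} are never loops, so inserting or deleting loops does not change $U_X$. When $X$ is a tournament, $\overline{X}$ and $X^{op}$ agree off the diagonal: for $u\neq v$ one has $(u,v)\in\overline{X}$ iff $(u,v)\notin E$ iff $(v,u)\in E$ iff $(u,v)\in X^{op}$. Thus $\overline{X}$ and $X^{op}$ differ only in loops, whence $U_{\overline{X}}=U_{X^{op}}$, and combining with the first part yields $U_X=U_{\overline{X}}$.

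The main obstacle is conceptual rather than computational: one must set up the cycle-reversal correspondence correctly and confirm that it respects both the membership condition defining $\mathbb{S}_V(X,\overline{X})$ and the sign statistic $\varphi$. A tempting shortcut — applying the listing reversal $\sigma\mapsto(\sigma_n,\ldots,\sigma_1)$ directly to the descent definition \eqref{descents} — does not work, since it sends $X\mathrm{Des}(\sigma)$ to the reflected subset $\{\,n-i\,\}$ of $X^{op}\mathrm{Des}(\sigma)$, and $F_I\neq F_{\{n-i\,:\,i\in I\}}$ term-by-term in general; the desired equality surfaces only after passing to the symmetric power-sum basis, which is precisely what the route above exploits.
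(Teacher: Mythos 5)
Your proof is correct, and it takes a genuinely different route from the one in the paper. The paper does not reprove Theorem \ref{opozit} itself (it is quoted from \cite{GS}); what it proves is the noncommutative generalization $W_X=W_{X^{op}}$, from which the commutative statement follows by letting the variables commute. That proof is a direct bijection at the level of Theorem \ref{nature}: an $(f,X)$-friendly listing is sent to an $(f,X^{op})$-friendly listing by keeping the non-decreasing color order and reversing the relative order of vertices sharing a color, the point being that equal colors on consecutive vertices force the connecting pair to be a non-edge of $X$, hence its reverse a non-edge of $X^{op}$. Your argument instead works in the power-sum expansion of Theorem \ref{pbaza} with the involution $\sigma\mapsto\sigma^{-1}$, and all the steps check out: inversion reverses cycles, so cycles of $X$ correspond to cycles of $X^{op}$; the identity $\overline{X^{op}}=(\overline{X})^{op}$ makes $\sigma\mapsto\sigma^{-1}$ a bijection $\mathbb{S}_V(X,\overline{X})\to\mathbb{S}_V(X^{op},\overline{X^{op}})$ (fixed points are harmless since $X$ and $\overline{X}$ partition $V\times V$ and a loop is its own reverse); $\mathrm{type}$ and $\varphi$ are preserved, with the correct care that $\varphi$ is computed relative to the ambient digraph. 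Your handling of the tournament consequence (loops are invisible to $X\mathrm{Des}$, and $\overline{X}$, $X^{op}$ agree off the diagonal) is exactly the standard reduction and is also how \cite{GS} derives it. Comparing the two: the paper's bijection is more elementary (no $p$-expansion needed) and is forced by the noncommutative setting, where one must track the monomial $x_{f(v_1)}\cdots x_{f(v_n)}$ vertex by vertex; your involution presupposes the nontrivial expansion of Theorem \ref{pbaza}, but in exchange makes the type and sign bookkeeping completely transparent. It is worth noting that your argument would in fact also lift to noncommuting variables: since inversion fixes the supports of cycles, $\mathrm{Type}(\sigma^{-1})=\mathrm{Type}(\sigma)$ as set partitions, so the same involution applied to the expansion of Theorem \ref{razvoj} reproves $W_X=W_{X^{op}}$ --- though in this paper that expansion is established later, and by deletion-contraction, so the friendly-listing bijection remains the logically earlier proof. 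Your closing remark is also accurate: the naive listing-reversal sends $X\mathrm{Des}(\sigma)$ to the reflected set $\{n-i\,:\,i\in X\mathrm{Des}(\sigma)\}$ for $X^{op}$, and since $F_I\neq F_{\{n-i\,:\,i\in I\}}$ in general, that shortcut fails term-by-term, which is precisely why one must pass either to the power-sum basis or to a finer bijection.
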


\begin{theorem}
    \cite{GS}\label{proizvod}
    For any two digraphs $X$ and $Y$, \[U_{X\cdot Y}=U_X\cdot U_Y.\]
\end{theorem}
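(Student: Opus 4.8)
The plan is to expand $U_{X\cdot Y}$ in the power-sum basis via Theorem \ref{pbaza} and reduce the identity to a multiplicative factorization of the indexing permutations. Write $W=V\sqcup V'$ for the vertex set of $X\cdot Y$. By Theorem \ref{pbaza},
\[U_{X\cdot Y}=\sum_{\sigma\in\mathbb{S}_W(X\cdot Y,\overline{X\cdot Y})}(-1)^{\varphi(\sigma)}p_{\mathrm{type}(\sigma)},\]
and likewise for $U_X$ and $U_Y$. Since $p_{\lambda}p_{\mu}=p_{\lambda\cup\mu}$ for the multiset union of partitions, it suffices to produce a bijection $\mathbb{S}_W(X\cdot Y,\overline{X\cdot Y})\leftrightarrow\mathbb{S}_V(X,\overline{X})\times\mathbb{S}_{V'}(Y,\overline{Y})$ under which the type is the multiset union of the two types and the exponent $\varphi$ is additive.

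The key structural observation is the asymmetry of the new arcs of the product: in $X\cdot Y$ every pair $(u,v)$ with $u\in V$, $v\in V'$ is an edge, whereas no pair $(v,u)$ with $v\in V'$, $u\in V$ is; dually, in $\overline{X\cdot Y}$ every such $(v,u)$ is an edge and no such $(u,v)$ is. Consequently a single cycle all of whose arcs lie in $X\cdot Y$, or all of whose arcs lie in $\overline{X\cdot Y}$, can never contain vertices from both $V$ and $V'$: to do so it would have to pass from $V$ to $V'$ and also from $V'$ back to $V$, but only one of these two directions is available in each of the two digraphs. Hence every $\sigma\in\mathbb{S}_W(X\cdot Y,\overline{X\cdot Y})$ fixes the blocks setwise, $\sigma(V)=V$ and $\sigma(V')=V'$, and therefore factors uniquely as $\sigma=\sigma_V\times\sigma_{V'}$ with $\sigma_V\in\mathbb{S}_V$ and $\sigma_{V'}\in\mathbb{S}_{V'}$.

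It remains to match up the defining conditions. The subdigraph of $X\cdot Y$ induced on $V$ is exactly $X$ and the one induced on $V'$ is exactly $Y$, while the subdigraph of $\overline{X\cdot Y}$ induced on $V$ is $\overline{X}$ and the one induced on $V'$ is $\overline{Y}$ (every arc of a cycle contained in $V$ lies in $V\times V$, where $X\cdot Y$ agrees with $X$ and $\overline{X\cdot Y}$ agrees with $\overline{X}$, and symmetrically on $V'$). Thus a cycle of $\sigma$ contained in $V$ is a cycle of $X\cdot Y$, resp.\ of $\overline{X\cdot Y}$, if and only if it is a cycle of $X$, resp.\ of $\overline{X}$, so $\sigma_V\in\mathbb{S}_V(X,\overline{X})$, and symmetrically $\sigma_{V'}\in\mathbb{S}_{V'}(Y,\overline{Y})$; the map $\sigma\mapsto(\sigma_V,\sigma_{V'})$ is evidently a bijection. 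Under it $\mathrm{type}(\sigma)=\mathrm{type}(\sigma_V)\cup\mathrm{type}(\sigma_{V'})$, and since the cycles of $\sigma$ that are cycles of $X\cdot Y$ are exactly the $X$-cycles of $\sigma_V$ together with the $Y$-cycles of $\sigma_{V'}$, we get $\varphi(\sigma)=\varphi(\sigma_V)+\varphi(\sigma_{V'})$. Substituting into the power-sum expansion and factoring the resulting double sum yields $U_{X\cdot Y}=U_X\cdot U_Y$.

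The one step that genuinely requires care, and which I expect to be the crux, is the cycle-factorization claim of the second paragraph: everything hinges on the fact that the product introduces arcs only in the direction $V\to V'$, so that no cycle in $X\cdot Y$ or in its complement can straddle the two vertex sets. Once this is in place, the multiplicativity of $p_{\mathrm{type}}$ and the additivity of $\varphi$ are routine bookkeeping. As an alternative route, one could argue directly from the coloring description of Theorem \ref{nature}: there the same asymmetry forces, within each block of equal color, all $V'$-vertices to precede all $V$-vertices, so that every $(f\cup g,X\cdot Y)$-friendly $W$-listing restricts to an $(f,X)$-friendly listing on $V$ and a $(g,Y)$-friendly listing on $V'$ and is uniquely recovered from that pair; summing $\mathbf{x}_f\mathbf{x}_g$ over all pairs then gives the product as well.
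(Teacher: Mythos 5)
Your proposal is correct, but your primary argument takes a genuinely different route from the paper's. The paper cites \cite{GS} for this statement and, for its noncommutative generalization $W_{X\cdot Y}=W_X\cdot W_Y$, argues directly from the coloring description of Theorem \ref{nature} --- which is precisely your alternative sketch in the last paragraph: a pair $(\sigma,\sigma')$ of friendly listings merges into a unique friendly listing of $X\cdot Y$ by sorting colors weakly and, within each color class, placing all $V'$-vertices before all $V$-vertices (forced because every arc $u\to v'$ with $u\in V$, $v'\in V'$ is present). Your main route instead expands in the power-sum basis via Theorem \ref{pbaza} and reduces to the factorization $\mathbb{S}_W(X\cdot Y,\overline{X\cdot Y})\cong\mathbb{S}_V(X,\overline{X})\times\mathbb{S}_{V'}(Y,\overline{Y})$; the crux you flag is sound: $X\cdot Y$ has arcs between the parts only in the direction $V\to V'$ and $\overline{X\cdot Y}$ only in the direction $V'\to V$, so no cycle of either digraph meets both parts, every admissible $\sigma$ preserves $V$ and $V'$ setwise, and since $X\cdot Y$ and $\overline{X\cdot Y}$ restrict on $V\times V$ to $X$ and $\overline{X}$ (and similarly on $V'$), the bookkeeping ($\mathrm{type}$ multiset-additive, $\varphi$ additive, $p_\lambda p_\mu=p_{\lambda\cup\mu}$) goes through. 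As for what each approach buys: yours is a quick reduction once Theorem \ref{pbaza} is granted and exhibits multiplicativity already at the level of the signed permutation expansion; the paper's listing bijection is elementary and self-contained, does not lean on the (nontrivial) power-sum expansion, and --- decisively for this paper --- carries over verbatim to noncommuting variables, where your route would first require the noncommutative analogue of the power-sum expansion (Theorem \ref{razvoj}), which the paper only establishes afterwards via deletion-contraction.
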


As we have noted in the introduction, the deletion-contraction property does not hold for the Redei-Berge function. Nevertheless, some decomposition techniques for $U_X$ that could replace deletion-contraction can be found in \cite{MS}.

\begin{theorem}
    \cite{MS}\label{razbijanje}  If $X=(V, E)$ is a digraph that is not a disjoint union of paths, then \begin{equation} \label{podskupovi}
          U_X=\sum_{S\subseteq E, S\neq\emptyset} (-1)^{|S|-1}U_{X\setminus S}.  \end{equation}
\end{theorem}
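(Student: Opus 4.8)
The plan is to first rewrite the claimed identity \eqref{podskupovi} in a more symmetric form. Moving everything to one side, splitting off the $S=\emptyset$ term and multiplying through by $-1$, one checks that \eqref{podskupovi} is equivalent to the single vanishing statement
\[\sum_{S\subseteq E}(-1)^{|S|}U_{X\setminus S}=0,\]
where $X\setminus S$ denotes the digraph $(V,E\setminus S)$. So I would reduce the theorem to proving that this alternating sum over \emph{all} edge subsets (now including the empty one) is zero precisely when $X$ is not a disjoint union of directed paths.

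Next I would expand every term using the combinatorial description from Theorem \ref{nature}. Writing $U_{X\setminus S}=\sum_{f:V\to\mathbb{P}}\sum_{\sigma\in\Sigma_V}\delta_f^{X\setminus S}(\sigma)\,\mathbf{x}_f$, where $\delta_f^{X\setminus S}$ is the friendliness indicator computed with respect to $X\setminus S$, and interchanging the order of summation, it suffices to fix a listing $\sigma$ and a coloring $f$ and show $\sum_{S\subseteq E}(-1)^{|S|}\delta_f^{X\setminus S}(\sigma)=0$. The key observation is that $(f,X\setminus S)$-friendliness depends on $S$ in a very controlled way: the weak monotonicity condition $f(\sigma_1)\le\cdots\le f(\sigma_n)$ does not involve $S$ at all, and deleting edges can only relax the strict-increase condition. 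Concretely, set
\[P=P(\sigma,f)=\{(\sigma_j,\sigma_{j+1})\in E : f(\sigma_j)=f(\sigma_{j+1})\},\]
the set of edges of $X$ sitting between consecutive entries of $\sigma$ that carry equal colors. Then, assuming $f$ is weakly increasing along $\sigma$, the listing $\sigma$ is $(f,X\setminus S)$-friendly if and only if every such obstructing edge has been deleted, i.e. $P\subseteq S$; if $f$ is not weakly increasing along $\sigma$, then $\delta_f^{X\setminus S}(\sigma)=0$ for all $S$.

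With this, the inner sum factors as a sign-reversing cancellation: $\sum_{S\supseteq P}(-1)^{|S|}=(-1)^{|P|}\sum_{T\subseteq E\setminus P}(-1)^{|T|}$, which vanishes unless $E\setminus P=\emptyset$, i.e. unless $P=E$. Hence the whole alternating sum is zero as long as no pair $(\sigma,f)$ achieves $P=E$, and I would read off exactly what this forces: $P=E$ requires every edge of $X$ to be of the form $(\sigma_j,\sigma_{j+1})$, so that $E\subseteq\{(\sigma_j,\sigma_{j+1}):j\in[n-1]\}$, meaning $X$ is a subgraph of the directed path determined by $\sigma$. Such digraphs are precisely the disjoint unions of directed paths (every vertex has in- and out-degree at most one and there are no directed cycles). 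The step I expect to require the most care is this structural equivalence: if $X$ is not a disjoint union of paths, then $E\not\subseteq\{(\sigma_j,\sigma_{j+1}):j\}$ for \emph{every} listing $\sigma$, so $P\subsetneq E$ for all $\sigma$ and $f$, every inner sum vanishes, and the reformulated identity holds. (Conversely, a constant coloring on a listing realizing $X$ as a subpath shows the sum is nonzero when $X$ \emph{is} a disjoint union of paths, which explains why the hypothesis is exactly the right one.) Everything else is bookkeeping together with the binomial identity $\sum_{T\subseteq A}(-1)^{|T|}=0^{|A|}$.
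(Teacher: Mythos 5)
Your proposal is correct. Note that the paper itself does not prove this statement (it is quoted from [MS]); what the paper does prove is the noncommutative analogue, via Lemma \ref{lema}: there, for each fixed coloring $f$ and each listing $\sigma$, the hypothesis supplies an edge $e\in E$ that is not of the form $(\sigma_i,\sigma_{i+1})$, and toggling $e$ in $S$ gives a parity-reversing involution on subsets under which friendliness of $\sigma$ with respect to $X\setminus S$ is unchanged. Your argument is the same cancellation carried out in closed form rather than bijectively: after fixing $(\sigma,f)$ you characterize friendliness exactly as $P(\sigma,f)\subseteq S$ and evaluate $\sum_{P\subseteq S\subseteq E}(-1)^{|S|}$ by the binomial identity, which vanishes unless $P=E$; the paper's toggle involution is precisely a bijective proof of this vanishing, and both arguments rest on the identical structural fact that a digraph whose edges all sit consecutively along one listing is a disjoint union of directed paths. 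Your version buys two small extras: it identifies \emph{exactly} when the alternating sum $\sum_{S\subseteq E}(-1)^{|S|}U_{X\setminus S}$ is nonzero, so the converse (a constant coloring along a listing realizing the paths shows the hypothesis is sharp) comes for free, whereas the paper's lemma only needs and only gives the one direction; and since your cancellation happens at the level of a fixed pair $(\sigma,f)$, it transfers verbatim to the noncommutative function $W_X$ by replacing $\mathbf{x}_f$ with the ordered monomial $x_{f(v_1)}\cdots x_{f(v_n)}$, recovering the paper's Lemma \ref{lema} (in its general form, with $F\subseteq E$ such that $(V,F)$ is not a disjoint union of paths, your argument applies after replacing $P$ by $P\cap F$ and summing over $S\subseteq F$). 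One pedantic point worth recording if you write this up: loops can never be consecutive in a listing, so for a digraph with a loop one always has $P\subsetneq E$, consistent with such a digraph not being a disjoint union of paths.
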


\begin{theorem}
    \cite{MS}\label{ciklus} If $e_1, e_2, \ldots, e_k$ is a list of edges that form a directed cycle in a digraph $X=(V, E)$, then \[U_X=\sum_{\substack{S\subseteq \{e_1, e_2, \ldots, e_k\} \\ S\neq\emptyset}}(-1)^{|S|-1}U_{X\setminus S}. \]
\end{theorem}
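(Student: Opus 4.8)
The plan is to prove Theorem~\ref{ciklus} by reducing it to Theorem~\ref{razbijanje}. Let $C=\{e_1,e_2,\ldots,e_k\}$ be the edge set of a directed cycle in $X$. I would first split the right-hand side of the desired identity according to the structure of the subset $S\subseteq C$, and then compare term by term with the full subset-sum formula~\eqref{podskupovi}.

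First I would observe that a directed cycle is certainly not a disjoint union of paths (it contains a cycle), so Theorem~\ref{razbijanje} applies to the subgraph spanned by $C$, and more importantly I want to leverage it in the ambient graph $X$. The key idea is to run the inclusion-exclusion of Theorem~\ref{razbijanje} but restricted to the edges of the cycle. Concretely, I would consider the digraph $X$ and apply the subset expansion only over $S\subseteq C$. To justify this, the cleanest route is induction on the number of edges of $X$ lying outside $C$. In the base case, when $E=C$ exactly, the graph $X$ is the directed $k$-cycle, which is not a disjoint union of paths, so Theorem~\ref{razbijanje} gives directly $U_X=\sum_{\emptyset\neq S\subseteq C}(-1)^{|S|-1}U_{X\setminus S}$, which is precisely the claim.

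For the inductive step, suppose $X$ has an edge $e\notin C$. I would apply Theorem~\ref{razbijanje} to $X$ (valid since $X$ contains the cycle $C$ and hence is not a disjoint union of paths), obtaining $U_X=\sum_{\emptyset\neq S\subseteq E}(-1)^{|S|-1}U_{X\setminus S}$. The strategy is then to show that all the terms involving edges outside $C$ cancel, leaving only the sum over nonempty $S\subseteq C$. To organize this, I would fix the edge $e\notin C$ and pair up each subset $S$ containing $e$ with the subset $S\setminus\{e\}$; these contribute $U_{X\setminus S}$ with opposite signs, and both deletions produce a graph that still contains $e$ removed. A more robust formulation is to group the terms by $T=S\cap C$ and $R=S\setminus C$, writing the right-hand side as $\sum_{T\subseteq C}\sum_{R\subseteq E\setminus C}(-1)^{|T|+|R|-1}U_{X\setminus(T\cup R)}$, and then to apply the inductive hypothesis to each graph $X\setminus T$ (still containing the cycle $C\setminus T$ only when $T=\emptyset$, which forces care) to collapse the inner sum.

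The hard part will be handling the case $T\neq\emptyset$: once some cycle edges are deleted, $C$ is broken and the inductive hypothesis no longer applies to $X\setminus T$ via the same cycle, so the inner inclusion-exclusion over $R$ must be controlled differently. I expect the cleanest resolution is to instead induct directly on $|E\setminus C|$ and, in the step, use Theorem~\ref{razbijanje} to peel off a single non-cycle edge $e$: write $U_{X}$ in terms of deletions of subsets containing $e$ versus not containing $e$, recognize the no-$e$ part as $U_{X\setminus e}$ plus lower terms, and apply the inductive hypothesis to $X\setminus e$ (which still contains the full cycle $C$). Verifying that the bookkeeping of signs and the telescoping of these two groups exactly reproduces the cycle-restricted sum is the crux, and I would carry it out by a careful inclusion-exclusion argument isolating the role of $e$.
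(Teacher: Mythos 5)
Your overall strategy---reducing Theorem~\ref{ciklus} to Theorem~\ref{razbijanje} by induction on the number of edges outside the cycle $C$---is viable, but your write-up never actually closes the crux, and both cancellation mechanisms you sketch fail as stated. Pairing a subset $S\ni e$ with $S\setminus\{e\}$ does not produce cancelling terms: $U_{X\setminus S}$ and $U_{X\setminus(S\setminus\{e\})}$ are Redei-Berge functions of \emph{different} digraphs, so they are not equal with opposite signs; that cancellation is only valid pointwise, per listing $\sigma$ avoiding $e$, which is an argument about friendly listings (the paper's Lemma~\ref{lema}), not something available at the level of whole functions. Similarly, in your ``peel off a non-cycle edge $e$'' fallback, the $e\notin S$ portion of the expansion of $U_X$ consists of terms $U_{X\setminus S}$ where $X\setminus S$ \emph{still contains} $e$, not terms $U_{(X\setminus e)\setminus S}$; so it is not ``$U_{X\setminus e}$ plus lower terms'' and the proposed telescoping does not exist. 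Finally, as you yourself notice, grouping the double sum by $T=S\cap C$ leaves the inner sum over $R$ uncontrolled, because the inductive hypothesis cannot be applied to $X\setminus T$ once the cycle is broken. So the proof as proposed has a genuine gap exactly at the step you defer to ``careful inclusion-exclusion.''

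Ironically, your decomposition $S=T\sqcup R$ does close the induction if you orient it the other way. Write $\Phi(Y)=\sum_{T\subseteq C}(-1)^{|T|}U_{Y\setminus T}$ (including $T=\emptyset$); the claim is $\Phi(X)=0$. Theorem~\ref{razbijanje} (with the empty set moved to the left side) gives $0=\sum_{S\subseteq E}(-1)^{|S|}U_{X\setminus S}$, and fixing $R=S\setminus C$ yields $0=\sum_{R\subseteq E\setminus C}(-1)^{|R|}\Phi(X\setminus R)$. For each $R\neq\emptyset$ the digraph $X\setminus R$ still contains the \emph{entire} cycle $C$ and has strictly fewer non-cycle edges, so the inductive hypothesis gives $\Phi(X\setminus R)=0$, and the $R=\emptyset$ term then forces $\Phi(X)=0$; your base case $E=C$ is handled correctly. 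With this repair your route is a legitimate alternative to the paper's. The paper itself argues differently and more locally (Lemma~\ref{lema}, stated in the noncommutative setting but specializing to $U_X$ via Theorem~\ref{nature}): for each fixed coloring $f$, the alternating sum of the numbers of friendly listings vanishes by a sign-reversing involution, since the consecutive pairs of any $V$-listing form a Hamiltonian path and therefore miss at least one edge $e$ of the cycle, and toggling $e$ in $S$ pairs the terms. That argument needs no induction and no appeal to Theorem~\ref{razbijanje}, and it proves a stronger, pointwise statement (valid for any $F$ that is not a disjoint union of paths); your approach, once fixed, buys only the weaker function-level identity but derives it formally from the subset expansion alone.
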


We now introduce the space of our particular interest - the vector space of the noncommutative symmetric functions. Noncommutative symmetric functions will be indexed by the elements of partition lattice. Let $\Pi_n$ denote the lattice of set partitions of $[n]=\{1, 2, \ldots, n\}$ ordered by refinement. For $\pi\in\Pi_n$, we write $\pi=B_1/B_2/\ldots/B_l$ if the $B_i$'s are the blocks of $\pi$. Let $\lambda(\pi)$ denote the integer partition of $n$ whose parts correspond to the block sizes of $\pi$. If $\lambda(\pi)=(1^{r_1}, 2^{r_2}, \ldots, n^{r_n})$, we will write $|\pi|$ for $r_1!r_2!\cdots r_n!$ and $\pi!$ for $1!^{r_1}2!^{r_2}\cdots n!^{r_n}$.

We define the noncommutative monomial symmetric function, $m_{\pi}$, by \[m_{\pi}=\sum_{i_1, i_2, \ldots, i_n}x_{i_1}x_{i_2}\cdots x_{i_n},\]
where the sum is over all sequences $i_1, i_2, \ldots, i_n$ of positive integers such that $i_j=i_k$ if and only if $j$ and $k$ are in the same block of $\pi$. It is easy to see that letting the variables commute transforms $m_\pi$ to $|\pi|m_{\lambda(\pi)}$. The noncommutative monomial symmetric functions are linearly independent over $\mathbb{C}$ and we call their span the algebra of noncommutative symmetric functions.

Another basis of this space consists of the noncommutative power sum symmetric functions given by \begin{equation}
    \label{pprekom}p_{\pi}=\sum_{\pi\leq\sigma}m_\sigma=\sum_{i_1, i_2, \ldots, i_n}x_{i_1}x_{i_2}\cdots x_{i_n},\end{equation}
where the second sum is over all positive integer sequences such that $i_j=i_k$ whenever $j$ and $k$ are in the same block of $\pi$. Clearly, if we let the variables commute, we transform $p_{\pi}$ into $p_{\lambda(\pi)}$.

Finally, the third basis we will be interested in contains the noncommutative elementary symmetric functions, defined by \[e_{\pi}=\sum_{i_{1}, i_{2}, \ldots, i_n}x_{i_1}x_{i_2}\cdots x_{i_n},\]
where the sum runs over all sequences $i_1, i_2, \ldots, i_n$ of positive integers such that $i_j\neq i_k$ if $j$ and $k$ are in the same block of $\pi$. Allowing the variables commute transforms $e_\pi$ into $\pi !e_{\lambda(\pi)}.$ In \cite{D}, it is shown that \begin{equation} \label{pprekoe}
    p_{\pi}=\frac{1}{\mu(\widehat{0}, \pi)}\sum_{\sigma\leq\pi}\mu(\sigma, \pi)e_{\sigma},
\end{equation}
where $\widehat{0}$ denotes the minimal element and $\mu$ denotes the Möbius function of $\Pi_n$.

 For basics of symmetric and quasisymmetric functions in noncommuting variables, see \cite{BZ}, \cite{SG}. Note that these functions are different from the noncommuting symmetric functions studied by Gelfand. 
 
 It is obvious that these functions are invariant under the usual action of the symmetric group, hence, these functions are symmetric in the usual sense. On the other hand, we will need to define another action of the symmetric group $\mathbb{S}_n$ on this space, which permutes the positions of the variables. For $\delta\in\mathbb{S}_n$, we define $\delta\circ(x_{i_1}x_{i_2}\cdots x_{i_n})=x_{i_{\delta^{-1}(1)}}x_{i_{\delta^{-1}(2)}}\cdots x_{i_{\delta^{-1}(n)}}$, which we extend linearly to the whole space of quasisymmetric functions in noncommuting variables. It is clear that $\delta\circ m_{\pi}=m_{\delta(\pi)}$ and that $\delta\circ p_{\pi}=p_{\delta(\pi)}$, where $\delta(\pi)$ denotes a partition obtained from $\pi$ by permuting the elements of the blocks of $\pi$ by $\delta$.

We define the induction operation on monomials, denoted by $\uparrow$, as \[(x_{i_1}x_{i_2}\ldots x_{i_{n-2}}x_{i_{n-1}})\uparrow=x_{i_1}x_{i_2}\ldots x_{i_{n-2}}x_{i_{n-1}}^2\] and extend it linearly to the space of noncommutative quasisymmetric functions.  We can easily see how this operation affects the monomial and the power sum symmetric functions. For $\pi\in\Pi_{n-1}$, let $\pi+(n)\in\Pi_n$ denote the partition $\pi$ with $n$ inserted into the block containing $n-1$. Then, $m_{\pi}\uparrow=m_{\pi+(n)}$ and $p_{\pi}\uparrow=p_{\pi+(n)}$.

\section{The Redei-Berge function in noncommuting variables}

Inspired by the work of Gebhard and Sagan \cite{SG}, in this section we define the central object of our paper - the noncommutative Redei-Berge symmetric function. As we have already mentioned, our definition will be motivated by Theorem \ref{nature}.

\begin{definition}
    For a digraph $X=(V, E)$ with vertices labeled $v_1, v_2, \ldots, v_n$ in fixed order, its Redei-Berge function in noncommuting variables, denoted as $W_X$, is \[W_X=\sum_{f:V\rightarrow\mathbb{P}}\sum_{\sigma\in\Sigma_V}\delta_f(\sigma)x_{f(v_1)}x_{f(v_2)}\cdots x_{f(v_n)}.\]
\end{definition}

From the definition, it is not quite clear that this expression gives us a symmetric function in noncommuting variables. However, it is obvious that $W_X$ is a quasisymmetric function in noncommuting variables. Furthermore, $W_X$ depends not only on $X$, but also on the labeling of its vertices. Clearly, if we let variables commute, we transform $W_X$ into $U_X$.

\begin{example}  Let $K_n=(V, V\times V)$ denote the complete digraph on $n$ vertices. For a function $f: V\rightarrow\mathbb{P}$, there is a friendly listing of $V$ if and only if $f(u)\neq f(v)$ for any two distinct vertices $u, v\in V$. Moreover, for such function, there is exactly one friendly listing. In other words, $W_{K_n}=e_{([n])}$, hence\[U_n=n!e_n.\] \end{example}

\begin{example}\label{diskretan}
    Let $D_n=(V, \emptyset)$ be the discrete digraph on $n$ vertices. For a function $f: V\rightarrow\mathbb{P}$ such that $f[V]$ contains $k$ values $c_1<c_2< \cdots< c_k$, there are $|f^{-1}[\{c_1\}]|!|f^{-1}[\{c_2\}]|!\cdots |f^{-1}[\{c_k\}]|!$ listings that are friendly with $f$. Therefore, \[W_{D_n}=\sum_{\pi\in\Pi_n} \pi! m_{\pi}.\] 
\end{example}

Certain properties of the Redei-Berge function in noncommuting variables can be directly deduced from its definition. The following proposition is a generalization of Theorem \ref{opozit}. We list the vertices of $X^{op}$ and $\overline{X}$ in the same order as in $X$.

\begin{theorem} For any labeled digraph $X=(V, E)$, \[W_X=W_{X^{op}}.\] If $X$ is a labeled tournament, then \[W_X=W_{\overline{X}}.\]
\end{theorem}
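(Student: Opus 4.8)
The plan is to exploit the fact that in the definition of $W_X$ the monomial $x_{f(v_1)}x_{f(v_2)}\cdots x_{f(v_n)}$ depends only on the coloring $f$ and on the (common, fixed) vertex labeling, not on the listing $\sigma$. Summing the indicators first, this lets me rewrite
\[W_X=\sum_{f:V\rightarrow\mathbb{P}}|\Sigma_V(f,X)|\,x_{f(v_1)}x_{f(v_2)}\cdots x_{f(v_n)}.\]
Since $X$ and $X^{op}$ share the same vertex set and the same labeling, the monomials match term by term, so it suffices to prove the numerical identity $|\Sigma_V(f,X)|=|\Sigma_V(f,X^{op})|$ for every fixed $f$. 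I would establish this by an explicit coloring-preserving bijection.

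First I would unpack the shape of an $(f,X)$-friendly listing. The weak monotonicity condition $f(\sigma_1)\leq\cdots\leq f(\sigma_n)$ forces each color class $f^{-1}(c)$ to occupy a contiguous block, with the blocks arranged in increasing order of color. Across the boundary between two consecutive blocks the colors strictly increase, so the strict-inequality clause of friendliness holds there automatically, regardless of which edges are present. Hence the only genuine constraint is internal to each block: if $\sigma_j$ and $\sigma_{j+1}$ lie in the same color class, then $(\sigma_j,\sigma_{j+1})\notin E$, for otherwise the required strict inequality would fail.

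With this description the bijection I propose is to reverse the order of the vertices within each color block while leaving the blocks themselves in place. This preserves $f$ and the contiguous-block structure, so weak monotonicity and the harmless boundary conditions survive unchanged. The content of the argument is the internal condition: a consecutive pair in the reversed block is a consecutive pair of the original block read backwards, and writing $E'=\{(v,u)\mid (u,v)\in E\}$ for the edge set of $X^{op}$, the statement $(b,a)\notin E$ is exactly $(a,b)\notin E'$. Thus avoiding $E$-edges as forward consecutive pairs within a block is equivalent to avoiding $E'$-edges after reversal. Applying the same block-reversal for $X^{op}$ returns the original listing, since $(X^{op})^{op}=X$, so the map is an involutive bijection between $\Sigma_V(f,X)$ and $\Sigma_V(f,X^{op})$, yielding the equality of cardinalities and hence $W_X=W_{X^{op}}$.

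For the tournament consequence I would first note that loops are invisible to $W_X$: a listing never repeats a vertex, so every consecutive pair $(\sigma_j,\sigma_{j+1})$ joins distinct vertices and loops in $E$ never affect friendliness. For a tournament and any two distinct vertices $u,v$, exactly one of $(u,v),(v,u)$ lies in $E$, so $(u,v)$ is an edge of $\overline{X}$ iff $(u,v)\notin E$ iff $(v,u)\in E$ iff $(u,v)\in E'$; that is, $\overline{X}$ and $X^{op}$ have identical edge sets away from loops. Since loops are irrelevant, $\Sigma_V(f,\overline{X})=\Sigma_V(f,X^{op})$ for every $f$, whence $W_{\overline{X}}=W_{X^{op}}=W_X$. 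The step I expect to require the most care is the verification in the bijection that reversing within a block interchanges the $E$- and $E'$-avoidance conditions while the inter-block boundaries remain automatically satisfied; everything else is routine bookkeeping.
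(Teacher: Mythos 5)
Your proposal is correct and takes essentially the same approach as the paper: both arguments fix a coloring $f$ and biject $\Sigma_V(f,X)$ with $\Sigma_V(f,X^{op})$ by reversing the order of vertices within each color class, noting that the within-block non-edge condition in $X$ becomes exactly the corresponding condition in $X^{op}$. Your explicit handling of the tournament case (loops being invisible to $W_X$, so $\overline{X}$ and $X^{op}$ agree up to loops) fills in a detail the paper states without proof, but the route is the same.
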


\begin{proof} If $\sigma=(\sigma_1, \sigma_2, \ldots, \sigma_n)$ is an $f$-friendly listing of $V$ in $X$, it induces a unique $f$-friendly listing of $V$ in $X^{op}$ in the following way. First, we need to list these vertices in non-decreasing order with regard to $f$. Furthermore, we list the vertices with the same $f$ value in $\sigma$-reversing order. This is because, if $f(\sigma_i)=f(\sigma_{i+1})$, then $(\sigma_i, \sigma_{i+1})$ is not an edge of $X$. Therefore, $(\sigma_{i+1}, \sigma_i)$ is not an edge of $X^{op}$, so the conditions from Definition \ref{prijateljski} are satisfied.
    
\end{proof}

If $X$ and $Y$ are digraphs with labeled vertices, then we label the vertices of $X\cdot Y$ by listing the vertices of $X$ first in the same order as in $X$ and then the vertices of $Y$ in the same order as in $Y$. The following is a generalization of Theorem \ref{proizvod}.

\begin{theorem} For any two labeled digraphs, \[W_{X\cdot Y}=W_X\cdot W_Y.\]    
\end{theorem}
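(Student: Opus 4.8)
The plan is to compare the two sides coefficient by coefficient, indexing monomials by colorings. Writing $V=\{v_1,\dots,v_n\}$ and $V'=\{v'_1,\dots,v'_m\}$, every coloring $h:V\sqcup V'\to\mathbb{P}$ contributes to $W_{X\cdot Y}$ the monomial $x_{h(v_1)}\cdots x_{h(v_n)}x_{h(v'_1)}\cdots x_{h(v'_m)}$ with multiplicity equal to the number of $(h,X\cdot Y)$-friendly listings of $V\sqcup V'$. On the other side, since multiplication in the noncommutative algebra is concatenation of monomials, each pair $(f,g)$ with $f:V\to\mathbb{P}$ and $g:V'\to\mathbb{P}$ contributes the monomial $x_{f(v_1)}\cdots x_{f(v_n)}x_{g(v'_1)}\cdots x_{g(v'_m)}$ with multiplicity $|\Sigma_V(f,X)|\cdot|\Sigma_{V'}(g,Y)|$. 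Because a coloring $h$ of $V\sqcup V'$ is exactly the data of a pair $(f,g)=(h|_V,h|_{V'})$ and the two monomials then coincide, it suffices to prove that the number of $(h,X\cdot Y)$-friendly listings equals $|\Sigma_V(h|_V,X)|\cdot|\Sigma_{V'}(h|_{V'},Y)|$ for every fixed $h$.

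To establish this I would exhibit a bijection between $(h,X\cdot Y)$-friendly listings and pairs consisting of an $(h|_V,X)$-friendly listing of $V$ and an $(h|_{V'},Y)$-friendly listing of $V'$. The bijection sends a listing $\sigma$ to the pair of subsequences obtained by restricting to $V$ and to $V'$; its inverse merges a pair $(\tau,\rho)$ by sorting all of $V\sqcup V'$ into color blocks and, inside each block, placing the $V'$-vertices (in their $\rho$-order) before the $V$-vertices (in their $\tau$-order). The crux is a structural observation about the edges of $X\cdot Y$ that join every vertex of $X$ to every vertex of $Y$: in any friendly listing two vertices of the same color are never joined by an edge from the earlier to the later one, so a vertex of $V$ can never be immediately followed by a vertex of $V'$ of the same color. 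This forces, within each color block, all $V'$-vertices to precede all $V$-vertices, which is exactly the shape produced by the merge, so extraction and merging are mutually inverse.

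It then remains to check that both maps land in the correct sets. For the forward map, the $V$-vertices of a fixed color occur consecutively in $\sigma$ (they sit at the tail of their block by the structural observation), so their relative adjacencies are preserved in the subsequence, while any adjacency created across two distinct color blocks involves a strict increase of color and is therefore harmless; hence the restriction is $(h|_V,X)$-friendly, and symmetrically for $V'$. For the inverse map one verifies directly from Definition \ref{prijateljski} that the merged listing is $(h,X\cdot Y)$-friendly: within a block the only newly created adjacency is a $V'$-vertex immediately before a $V$-vertex, and such a pair is never an edge of $X\cdot Y$, while all between-block adjacencies again sit across a strict increase in color.

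I expect the main obstacle to be the structural lemma forcing the $V'$-before-$V$ order inside each color block, since that is what makes the extraction and merging inverse to each other and what encodes the product structure; once it is in place, the remaining verifications that both maps respect Definition \ref{prijateljski} are routine bookkeeping about adjacencies within and across color blocks.
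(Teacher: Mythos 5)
Your proof is correct and takes essentially the same approach as the paper's: both identify a coloring $h$ of $V\sqcup V'$ with the pair $(h|_V,h|_{V'})$ and establish the same bijection between $(h,X\cdot Y)$-friendly listings and pairs of friendly listings, merging within each color block with the $V'$-vertices placed before the $V$-vertices, justified by the identical structural observation that an edge $(v,v')$ with $v\in V$, $v'\in V'$ forces a strict color increase. Your write-up merely makes the coefficient comparison and the routine friendliness verifications more explicit than the paper does.
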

\begin{proof}
    Let $X=(V, E)$ and $Y=(V', E')$ be labeled digraphs. If $f:V\rightarrow \mathbb{P}$ and $g:V'\rightarrow\mathbb{P}$ are two functions, then they induce a unique function, denoted as $fg:V\sqcup V'\rightarrow \mathbb{P}$ and vice-versa. Likewise, if $\sigma\in \Sigma_V(f,X)$ and $\sigma'\in \Sigma_{V'}(g,Y)$, then there is a unique listing in $\Sigma_{V\sqcup V'}(fg,X\cdot Y)$ that corresponds to the pair $(\sigma, \sigma')$. Namely, we need to list these vertices in non-decreasing order with regard to $fg$. Further, the vertices with the same $fg$ value, need to be arranged by listing the vertices from $Y$ first in $\sigma'$ order and then the vertices from $X$ in $\sigma$ order. This is because, if $v\in V$ precedes $v'\in V'$, then, since $(v, v')$ is an edge of $X\cdot Y$, we would need to have $fg(v)<fg(v')$. In the same manner, any listing in $\Sigma_{V\sqcup V'}(fg,X\cdot Y)$ uniquely splits to a pair of listings from $\Sigma_V(f,X)\times\Sigma_V(g, Y)$.
\end{proof}

In order to show that $W_X$ satisfies the deletion-contraction property, we need a distinguished edge. If the vertices of $X$ are labeled $v_1, v_2, \ldots, v_n$, we would like this edge to be exactly $(v_{n-1}, v_n)$. To obtain such a labeling, we will define an action of the symmetric group $\mathbb{S}_n$ on the set of vertices $v_1, v_2, \ldots, v_n$ by $\delta(v_i)=v_{\delta(i)}$ for $\delta\in\mathbb{S}_n$. Clearly, the following statement holds.

\begin{proposition} [Relabeling proposition] \label{relabel} For any digraph $X$, $\delta\circ W_X=W_{\delta(X)}$.
\end{proposition}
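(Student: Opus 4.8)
The plan is to unwind both sides directly from the definition of $W_X$ and the position action $\delta\circ(-)$, and then to exhibit a bijection on $V$-listings that matches coefficients monomial-by-monomial. First I would apply the action term-by-term: for a coloring $f$ and a listing $\sigma$, the definition sends the monomial $x_{f(v_1)}\cdots x_{f(v_n)}$ to $x_{f(v_{\delta^{-1}(1)})}\cdots x_{f(v_{\delta^{-1}(n)})}$. The key observation is that this permuted monomial is again of the form $x_{g(v_1)}\cdots x_{g(v_n)}$, where $g=f\circ\delta^{-1}$ in the sense of the vertex map $\delta^{-1}(v_k)=v_{\delta^{-1}(k)}$; explicitly, $g(v_k)=f(v_{\delta^{-1}(k)})$.

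Next I would reindex the outer sum. Since $\delta$ is a bijection of $V$, the assignment $f\mapsto g=f\circ\delta^{-1}$ is a bijection on the set of all colorings $V\to\mathbb{P}$, so summing over $f$ is the same as summing over $g$ with $f=g\circ\delta$. After this substitution, the coefficient of the monomial $x_{g(v_1)}\cdots x_{g(v_n)}$ in $\delta\circ W_X$ equals the number of $V$-listings $\sigma$ that are $(g\circ\delta,X)$-friendly. On the other hand, straight from the definition, the coefficient of the same monomial in $W_{\delta(X)}$ is the number of $V$-listings that are $(g,\delta(X))$-friendly. So it suffices to match these two counts for every $g$.

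The heart of the argument is the bijection $\sigma\mapsto\tau$ with $\tau_i=\delta(\sigma_i)$, which is a bijection of $\Sigma_V$ because $\delta$ is a bijection of $V$. I would check that $\sigma$ is $(g\circ\delta,X)$-friendly if and only if $\tau$ is $(g,\delta(X))$-friendly. The monotonicity condition transports immediately, since $g(\tau_i)=g(\delta(\sigma_i))=(g\circ\delta)(\sigma_i)$ for all $i$. For the strict-ascent condition one uses that the edges of $\delta(X)$ are exactly the pairs $(\delta(u),\delta(v))$ with $(u,v)\in E$; hence $(\tau_j,\tau_{j+1})=(\delta(\sigma_j),\delta(\sigma_{j+1}))$ is an edge of $\delta(X)$ precisely when $(\sigma_j,\sigma_{j+1})$ is an edge of $X$, and combined with the previous identity the two families of required strict inequalities coincide. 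Since the emitted monomial depends only on $g$, this bijection matches the two counts and gives $\delta\circ W_X=W_{\delta(X)}$.

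I expect the only real pitfall to be bookkeeping: the position action carries $\delta^{-1}$ while the vertex relabeling carries $\delta$, so one must apply each to the correct object (colorings, listings, or edges) to see the inverses cancel. No genuine difficulty arises beyond tracking these two distinct actions, which is presumably why the statement is flagged as clear.
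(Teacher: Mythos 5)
Your proof is correct: the paper itself offers no argument for this proposition (it is asserted with ``Clearly, the following statement holds''), and your verification is exactly the routine unwinding the author intends, made rigorous. Your two key points --- that in noncommuting variables each monomial $x_{g(v_1)}\cdots x_{g(v_n)}$ corresponds to a unique coloring $g$, so it suffices to match coefficients coloring-by-coloring via $f = g\circ\delta$, and that $\sigma\mapsto\tau$ with $\tau_i=\delta(\sigma_i)$ carries $(g\circ\delta,X)$-friendliness to $(g,\delta(X))$-friendliness --- are precisely what makes the claim true, and your bookkeeping of where $\delta$ versus $\delta^{-1}$ acts is accurate throughout.
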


The deletion of an edge $e\in E$ from a digraph $X=(V,E)$ is the digraph $X\setminus e=(V,E\setminus\{e\})$. The contraction of $X$ by $e=(u,v)\in E$ is the digraph $X/e=(V', E')$, where $V'=V\setminus\{u,v\}\cup\{e\}$ and $E'$ contains all edges in $E$ with vertices different from $u,v\in V$ and additionally for $w\neq u,v$ we have
\begin{itemize}
\item $(w,e)\in E'$ if and only if $(w,u)\in E$ and
\item $(e,w)\in E'$ if and only if $(v,w)\in E$.
\end{itemize}
Note that the above operation of edge contraction on digraphs is different from the usual one since it takes into account the orientation of the contracting edge.

\begin{theorem} [Deletion-contraction] Let $X=(V, E)$ be any digraph with vertices labeled $v_1, v_2, \ldots, v_n$ and let $e=(v_{n-1}, v_n)$ be an edge in $X$. Then, \begin{equation}\label{delcon}
    W_X=W_{X\setminus e}-W_{X/e}\uparrow,\end{equation}
where the vertex obtained by contraction in $X/e$ is labeled $v_{n-1}$.
\end{theorem}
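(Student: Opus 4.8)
The plan is to prove the equivalent identity $W_{X\setminus e}-W_X=W_{X/e}\uparrow$ by a weight-preserving bijection on the pairs $(f,\sigma)$ indexing the two sides, so that moving $W_X$ across turns the claim into a nonnegative (sign-free) equality of generating functions. First I would unpack the definition. Since the edge set of $X\setminus e$ is contained in that of $X$, the friendliness constraints for $X$ are a superset of those for $X\setminus e$, so every $(f,X)$-friendly listing is automatically $(f,X\setminus e)$-friendly; that is, $\Sigma_V(f,X)\subseteq\Sigma_V(f,X\setminus e)$ for every coloring $f$. Consequently $W_{X\setminus e}-W_X$ is the generating function, with the same weights $x_{f(v_1)}\cdots x_{f(v_n)}$, of exactly those pairs $(f,\sigma)$ for which $\sigma$ is $(f,X\setminus e)$-friendly but not $(f,X)$-friendly.

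Next I would characterize these ``extra'' pairs precisely. The only edge distinguishing $X$ from $X\setminus e$ is $e=(v_{n-1},v_n)$, and $\sigma$ is already $(f,X\setminus e)$-friendly, so its failure of $(f,X)$-friendliness must occur at $e$. By Definition~\ref{prijateljski} this forces $v_{n-1}$ to immediately precede $v_n$ in $\sigma$, and it forces $f(v_{n-1})=f(v_n)$: the non-decreasing condition gives $f(v_{n-1})\leq f(v_n)$, while the failure of the strict inequality for $e$ rules out $f(v_{n-1})<f(v_n)$. Thus the extra pairs are exactly those in which the block $v_{n-1},v_n$ appears consecutively in $\sigma$ in this order and receives a common color.

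The core of the argument is a bijection between these extra pairs and the pairs $(g,\tau)$ counted by $W_{X/e}$. Given an extra pair $(f,\sigma)$, I would contract the consecutive block $v_{n-1},v_n$ to the single vertex of $X/e$ (labeled $v_{n-1}$), obtaining a listing $\tau$ of $V'$, and set $g$ equal to $f$ off the block and to the common value $f(v_{n-1})=f(v_n)$ on the contracted vertex; the inverse map simply re-expands that vertex into the consecutive block $v_{n-1},v_n$. I would then verify that $\tau$ is $(g,X/e)$-friendly by a short case analysis of its consecutive pairs: pairs avoiding the contracted vertex correspond verbatim to pairs of $\sigma$ among the untouched vertices, whereas a pair $(w,\text{contracted vertex})$, respectively $(\text{contracted vertex},w)$, is an edge of $X/e$ precisely when $(w,v_{n-1})$, respectively $(v_n,w)$, is an edge of $X$ — this is exactly where the orientation-sensitive definition of digraph contraction enters — so the required strict inequalities transfer from the $(f,X\setminus e)$-friendliness of $\sigma$. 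Finally, because $f(v_{n-1})=f(v_n)$, the weight $x_{f(v_1)}\cdots x_{f(v_n)}$ of the extra pair equals $x_{g(v_1)}\cdots x_{g(v_{n-2})}x_{g(v_{n-1})}^2$, which is precisely the image under $\uparrow$ of the weight $x_{g(v_1)}\cdots x_{g(v_{n-1})}$ of $(g,\tau)$. Summing the bijection over all extra pairs yields $W_{X\setminus e}-W_X=W_{X/e}\uparrow$, which is the claimed formula. I expect the friendliness-transfer case analysis to be the only delicate step, since it is the part that genuinely relies on the asymmetric contraction rule treating incoming and outgoing edges of the merged vertex differently.
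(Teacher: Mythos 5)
Your proposal is correct and follows essentially the same route as the paper: both identify the extra pairs in $\Sigma_V(f,X\setminus e)\setminus\Sigma_V(f,X)$ as exactly those listings where $v_{n-1}$ immediately precedes $v_n$ with $f(v_{n-1})=f(v_n)$, contract that consecutive block to obtain a bijection with $(\widetilde{f},X/e)$-friendly listings, and match weights via the induction operation $\uparrow$. Your explicit case analysis of how friendliness transfers through the orientation-sensitive contraction rule is a point the paper leaves implicit, but it is the same argument.
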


\begin{proof}
For any $f:V\rightarrow\mathbb{P}$, $\Sigma_V(f,X)\subseteq\Sigma_V(f,X\setminus e)$. On the other hand, the difference $\Sigma_V(f,X\setminus e)\setminus\Sigma_V(f,X)$ contains all $V$-listings $\sigma=(\sigma_1,\ldots,\sigma_n)$ with properties
\[f(\sigma_1)\leq\cdots\leq f(\sigma_n),\]
\[f(\sigma_i)<f(\sigma_{i+1}) \text{ for } (\sigma_i,\sigma_{i+1})\in E\setminus\{e\} \ \text{,}\]
\[(v_{n-1}, v_n)=(\sigma_j,\sigma_{j+1}) \ \text{for \ some} \ j=1,\ldots,n-1 \ \text{and} \ f(v_{n-1})=f(v_n).\] Such a listing determines the $V'$-listing \[\widehat{\sigma}=\left\{\begin{array}{ccc} (e,\sigma_3,\ldots,\sigma_n),& j=1,\\
(\sigma_1,\ldots,\sigma_{j-1},e,\sigma_{j+2},\ldots,\sigma_n),& 1<j<n-1,\\ (\sigma_1,\ldots,\sigma_{n-2},e),& j=n-1\end{array}\right.\] on the set of vertices $V'$ of the digraph $X/e$. Let $\widetilde{f}:V'\rightarrow\mathbb{P}$ be the coloring induced by $f$ with $\widetilde{f}(w)=\left\{\begin{array}{cc} f(w),& w\neq e,\\
f(v_{n-1})=f(v_n),& w=e\end{array}\right.$. Then $\widehat{\sigma}$ is a $(\widetilde{f}, X/e)$-friendly $V'$-listing. Any $(\widetilde{f},X/e)$-friendly $V'$-listing for some coloring $\widetilde{f}:V'\rightarrow\mathbb{P}$ is obtained uniquely in this way. Hence, the coefficients of $x_{f(v_1)}x_{f(v_2)}\cdots x_{f(v_n)}=x_{\widetilde{f}(v_1)}x_{\widetilde{f}(v_2)}\ldots x_{\widetilde{f}(v_{n-1})}\uparrow$ of the left and the right side of Equation (\ref{delcon}) are the same.
\end{proof}

According to Proposition \ref{relabel}, the deletion-contraction formula from the previous theorem can be applied to any digraph containing an edge that is not a loop. Inductively, we can remove all edges that are not loops. Since Example \ref{diskretan} tells us that discrete digraphs have symmetric Redei-Berge function and since loops do not affect this function, we get the following.

\begin{theorem}
    The Redei-Berge function in noncommuting variables of any digraph is symmetric.
\end{theorem}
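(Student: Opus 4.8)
The plan is to prove the statement by induction on the number of non-loop edges of $X$, using the deletion-contraction theorem to reduce to digraphs with fewer such edges and Proposition \ref{relabel} to position the distinguished edge. Here "symmetric" means lying in the span of the noncommutative monomial functions $m_\pi$, so the whole argument reduces to checking that membership in this span is preserved by the operations appearing in Equation (\ref{delcon}). First I would dispose of loops: a loop $(v,v)$ can never occur as a consecutive pair $(\sigma_j,\sigma_{j+1})$ in a $V$-listing, since a listing is a bijection and so has distinct consecutive entries; hence the friendly condition in Definition \ref{prijateljski} ignores loops and $W_X$ is unchanged by deleting them. The number of non-loop edges is therefore a sensible induction parameter. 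For the base case, a digraph with no non-loop edge has $W_X=W_{D_n}=\sum_{\pi\in\Pi_n}\pi!\,m_\pi$ by Example \ref{diskretan}, which is manifestly symmetric.

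For the inductive step, suppose $X$ has a non-loop edge. I would first note that symmetry is invariant under relabeling: since $\delta\circ m_\pi=m_{\delta(\pi)}$, the invertible operator $\delta\circ$ maps the span of the $m_\pi$ onto itself, so by Proposition \ref{relabel} the function $W_X$ is symmetric if and only if $W_{\delta(X)}$ is. I may thus relabel the vertices so that the chosen non-loop edge becomes $e=(v_{n-1},v_n)$ and invoke the deletion-contraction theorem to write $W_X=W_{X\setminus e}-W_{X/e}\uparrow$. Now $X\setminus e$ has exactly one fewer non-loop edge, while $X/e$ has at most one fewer: each non-loop edge of $X/e$ is the image of a distinct non-loop edge of $X$ under the (orientation-sensitive) contraction, and the edge $e$ itself contributes none, so no new non-loop edges are created. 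Both branches therefore have strictly fewer non-loop edges than $X$, and by the induction hypothesis $W_{X\setminus e}$ and $W_{X/e}$ are both symmetric.

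It then remains to see that $W_{X/e}\uparrow$ is symmetric. This is where the one genuinely structural point enters: the subspace of symmetric functions in noncommuting variables is closed under the induction operation, because $m_\pi\uparrow=m_{\pi+(n)}$ sends the span of $\{m_\pi:\pi\in\Pi_{n-1}\}$ into the span of $\{m_\pi:\pi\in\Pi_n\}$. Writing $W_{X/e}$ as a linear combination of monomial functions and applying $\uparrow$ termwise yields a linear combination of monomial functions, hence a symmetric function. Consequently $W_X$ is a difference of two symmetric functions and is itself symmetric, closing the induction.

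I expect the step most in need of care to be verifying that the inductive measure genuinely decreases on the contraction branch: the deletion branch is immediate, but for $X/e$ one must check, from the orientation-sensitive definition of contraction, that distinct non-loop edges of $X/e$ pull back to distinct non-loop edges of $X$, none of which is $e$. Paired with the closure of the symmetric subspace under $\uparrow$ and under relabeling, this makes the induction go through; everything else is routine bookkeeping in the $m_\pi$ basis.
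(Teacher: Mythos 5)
Your proof is correct and follows essentially the same route as the paper: induction on the number of non-loop edges, using the relabeling proposition to position the edge as $(v_{n-1},v_n)$, deletion-contraction, the discrete-digraph base case from Example \ref{diskretan}, and closure of the span of the $m_\pi$ under $\uparrow$ via $m_\pi\uparrow=m_{\pi+(n)}$. You are in fact more careful than the paper's one-paragraph argument, notably in verifying that the orientation-sensitive contraction strictly decreases the non-loop edge count and that symmetry is preserved under relabeling and induction, details the paper leaves implicit.
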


The deletion-contraction property allows us to derive recurrence relations dealing with some types of familiar digraphs. Unfortunately, the Redei-Berge function does not behave especially nice over disjoint union of digraphs.

\begin{example}
    Let $P_n$ denote the path digraph with labeled vertices $v_1, v_2, \ldots, v_n$ and with edges $(v_i, v_{i+1})$ for $i\in [n-1]$. Applying deletion-contraction to $(v_{n-1}, v_n)$, we obtain that for $n\geq 2$ \[W_{P_{n}}=W_{P_{n-1}\cup D_1}-W_{P_{n-1}}\uparrow.\]

\end{example}

We are now able to prove the generalization of Theorem \ref{pbaza}. For a permutation $\sigma\in\mathbb{S}_n$, let $\mathrm{Type}(\sigma)$ be the partition of the set $\{1, 2, \ldots, n\}$ whose blocks correspond to the cycles of the unique cycle decomposition of $\sigma$. If $\{f_i\}$ is a basis of some vector space $V$ and $v\in V$, let $[f_i]v$ denote the coefficient of $f_i$ in the expansion of $v$ in this basis.

\begin{theorem} \label{razvoj}
    If $X=(V, E)$ is a digraph with labeled vertices $v_1, v_2, \ldots, v_n$, then \[W_X=\sum_{\sigma\in\mathbb{S}_V(X, \overline{X})}(-1)^{\varphi(\sigma)}p_{\mathrm{Type}(\sigma)},\]
with $\varphi(\sigma)=\sum_{\gamma}(\ell (\gamma)-1)$, where the summation runs over all cycles $\gamma$ of $\sigma$ that are cycles in $X$ and $\ell(\gamma)$ denotes the length of the cycle $\gamma$. 
\end{theorem}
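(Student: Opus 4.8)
The plan is to prove the identity by induction on $|V|+|E|$, using the deletion-contraction recursion (\ref{delcon}); both $X\setminus e$ and $X/e$ have strictly smaller $|V|+|E|$ (contraction drops a vertex without increasing the edge count), so the induction is well-founded. By the Relabeling Proposition \ref{relabel} together with the equivariance of the right-hand side — conjugation $\sigma\mapsto\delta\sigma\delta^{-1}$ is a bijection $\mathbb{S}_V(X,\overline{X})\to\mathbb{S}_V(\delta(X),\overline{\delta(X)})$ preserving $\varphi$, while $\delta\circ p_{\mathrm{Type}(\sigma)}=p_{\mathrm{Type}(\delta\sigma\delta^{-1})}$ — it suffices to treat the case where the distinguished non-loop edge is $e=(v_{n-1},v_n)$. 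I write $\varphi_X,\varphi_{X\setminus e},\varphi_{X/e}$ for the statistic $\varphi$ computed in the respective digraph, and I use the harmless convention that a trivial cycle is vacuously a cycle of every digraph, so fixed points never constrain membership in the sets $\mathbb{S}_{(\cdot)}(\cdot,\cdot)$ and contribute $0$ to $\varphi$.

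For the base case $X$ has no non-loop edge; loops affect neither side (they concern only trivial cycles), so I may take $X=D_n$. Then $\overline{D_n}$ is complete, every non-trivial cycle of any $\sigma$ lies in $\overline{D_n}$ and none lies in $D_n$, whence $\mathbb{S}_V(D_n,\overline{D_n})=\mathbb{S}_V$ and $\varphi_{D_n}\equiv 0$, so the right-hand side is $\sum_{\sigma\in\mathbb{S}_V}p_{\mathrm{Type}(\sigma)}$. Expanding via (\ref{pprekom}) and extracting the coefficient of a fixed $m_\tau$, I must count permutations with $\mathrm{Type}(\sigma)\leq\tau$, that is, those stabilizing every block of $\tau$; there are $\prod_i|B_i|!=\tau!$ of them. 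Hence $\sum_{\sigma\in\mathbb{S}_V}p_{\mathrm{Type}(\sigma)}=\sum_{\tau\in\Pi_n}\tau!\,m_\tau=W_{D_n}$ by Example \ref{diskretan}.

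For the inductive step I substitute the inductively known expansions of $W_{X\setminus e}$ and $W_{X/e}$ into (\ref{delcon}). Let $\mathbb{S}_V^1=\{\sigma:\sigma(v_{n-1})=v_n\}$ and let $\mathbb{S}_V^0$ be its complement. If $\sigma\in\mathbb{S}_V^0$ then no cycle of $\sigma$ traverses $e$, so $\sigma$ has identical cycle-membership and identical $\varphi$ relative to $X$ and to $X\setminus e$; these terms occur with equal coefficients in $W_X$ and $W_{X\setminus e}$ and cancel, leaving the $\mathbb{S}_V^1$-terms to be matched against $-W_{X/e}\uparrow$. For $\sigma\in\mathbb{S}_V^1$ let $\gamma_0$ be the cycle through $v_{n-1},v_n$, and let $\sigma'\in\mathbb{S}_{V'}$ be obtained by deleting $v_n$ from $\gamma_0$; this is a bijection $\mathbb{S}_V^1\to\mathbb{S}_{V'}$. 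Since $\mathrm{Type}(\sigma)=\mathrm{Type}(\sigma')+(n)$, we get $p_{\mathrm{Type}(\sigma)}=p_{\mathrm{Type}(\sigma')}\uparrow$, which by linearity of $\uparrow$ is exactly the monomial occurring in $W_{X/e}\uparrow$. The contraction rules yield the cycle dictionary: any cycle other than $\gamma_0$ is a cycle of $X$ (resp. $\overline{X}$) iff the corresponding cycle of $\sigma'$ is a cycle of $X/e$ (resp. $\overline{X/e}$), while $\gamma_0$ is a cycle of $X$ iff $\gamma_0'$ is a cycle of $X/e$, and $\gamma_0$ is a cycle of $\overline{X\setminus e}$ iff $\gamma_0'$ is a cycle of $\overline{X/e}$. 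Since $\gamma_0$ traverses $e\in E$ it is never a cycle of $\overline{X}$ nor of $X\setminus e$, and this reduces the problem to the pointwise identity, for each $\sigma'$,
\[\mathbf{1}\{\sigma\in\mathbb{S}_V(X,\overline{X})\}(-1)^{\varphi_X(\sigma)}-\mathbf{1}\{\sigma\in\mathbb{S}_V(X\setminus e,\overline{X\setminus e})\}(-1)^{\varphi_{X\setminus e}(\sigma)}=-\mathbf{1}\{\sigma'\in\mathbb{S}_{V'}(X/e,\overline{X/e})\}(-1)^{\varphi_{X/e}(\sigma')},\]
which I would verify by splitting on whether $\gamma_0'$ is a cycle of $X/e$, of $\overline{X/e}$, or of neither, using that $\ell(\gamma_0')=\ell(\gamma_0)-1$ shifts the relevant exponent by one and thereby produces the global sign $-1$.

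The hard part, demanding the most care, is this sign bookkeeping together with the degenerate case $\ell(\gamma_0)=2$, where $\gamma_0=(v_{n-1},v_n)$ contracts to a fixed point $\gamma_0'$. There the clean dictionary breaks: under the vacuous-trivial-cycle convention $\gamma_0'$ is simultaneously a cycle of $X/e$ and of $\overline{X/e}$, regardless of the back-edge $(v_n,v_{n-1})$, whereas $\gamma_0$ is a cycle of exactly one of $X$ and $\overline{X\setminus e}$ according to whether $(v_n,v_{n-1})\in E$. Because a fixed point contributes $0$ to $\varphi_{X/e}$, one checks directly that in either case both sides of the displayed identity equal $-(-1)^{c}$, where $c$ denotes the common contribution to $\varphi$ of the cycles other than $\gamma_0$. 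Once this degenerate case and the three generic sign cases are dispatched, summing the pointwise identity over $\sigma'\in\mathbb{S}_{V'}$, weighting by $p_{\mathrm{Type}(\sigma')}$ and applying $\uparrow$, completes the induction.
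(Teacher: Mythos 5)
Your proof is correct and takes essentially the same route as the paper: induction on edges via deletion-contraction, with the same base case computation (counting permutations with $\mathrm{Type}(\sigma)\leq\pi$ to get $\pi!$) and an inductive step that matches terms of $W_{X\setminus e}$ and $-W_{X/e}\uparrow$ according to whether a cycle of $\sigma$ traverses $e$, i.e.\ your partition by $\sigma(v_{n-1})=v_n$. Your pointwise identity with its sign case analysis is just a more explicit bookkeeping of the paper's cancellation argument, and your separate treatment of the degenerate $2$-cycle $(v_{n-1},v_n)$ makes rigorous a case the paper passes over silently.
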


\begin{proof}
    We prove this theorem using induction on the number of edges that are not loops in $X$. If $X$ does not have such edges, then $W_X=W_{D_n}$ since loops do not affect our function. According to Example \ref{diskretan}, we have that \begin{equation} \label{nesto}
        W_X=\sum_{\pi\in\Pi_n} \pi! m_{\pi}.\end{equation}
    On the other hand, any permutation of vertices of $X$ is in the set $\mathbb{S}_V(X, \overline{X})$ due to the fact that $\overline{X}$ is the complete digraph on $n$ vertices. Also, $\varphi(\sigma)=0$ for any $\sigma$. Therefore, this theorem says that \[W_X=\sum_{\sigma\in\mathbb{S}_n}p_{\mathrm{Type}(\sigma)}.\]
    We claim that this expansion is the same as the one in Equation (\ref{nesto}). Using Equation (\ref{pprekom}), we see that \[[m_{\pi}]\sum_{\sigma\in\mathbb{S}_n}p_{\mathrm{Type}(\sigma)}=\#\{\sigma\in\mathbb{S}_n\mid \mathrm{Type}(\sigma)\leq\pi\}=\pi!,\]
    which proves the base of our induction.

    Now, if $X$ has edges that are not loops, we use the relabeling proposition to obtain a labeling of $X$ such that $e=(v_{n-1}, v_n)$ is an edge of $X$. From the deletion-contraction property, we have that \[W_X=W_{X\setminus e}-W_{X/e}\uparrow.\] 
    Since digraphs $X\setminus e$ and $X/e$ have less edges that are not loops than $X$, by inductive hypothesis, we only need to prove that \begin{multline}
        \label{dovoljno}
        \sum_{\sigma\in\mathbb{S}_V(X, \overline{X})}(-1)^{\varphi(\sigma)}p_{\mathrm{Type}(\sigma)}=\\\sum_{\sigma\in\mathbb{S}_V(X\setminus e, \overline{X\setminus e})}(-1)^{\varphi(\sigma)}p_{\mathrm{Type}(\sigma)}- \sum_{\sigma\in\mathbb{S}_V(X/e, \overline{X/e})}(-1)^{\varphi(\sigma)}p_{\mathrm{Type}(\sigma)}\uparrow. 
        \end{multline}
        
    Let $\sigma\in\mathbb{S}_V(X,\overline{X})$. If there is a cycle of $\sigma$ of the form $(\ldots, u, v_{n-1}, v_n, w, \ldots)$, then $\sigma$ induces a unique $\widehat{\sigma}\in\mathbb{S}_V(X/e, \overline{X/e})$ obtained from $\sigma$ by replacing that cycle with the cycle $(\ldots, u, e, w, \ldots)$. Since $v_{n-1}v_n\in E$, these cycles are in $X$ and $X/e$, hence $\varphi(\sigma)=\varphi(\widehat{\sigma})+1$. Clearly, $p_{\mathrm{Type}(\sigma)}=p_{\mathrm{Type}(\widehat{\sigma})}\uparrow$, and therefore $(-1)^{\varphi(\sigma)}p_{\mathrm{Type}(\sigma)}=-(-1)^{\varphi(\widehat{\sigma})}p_{\mathrm{Type}(\widehat{\sigma})}\uparrow$.

    Next, if there is not a cycle of $\sigma$ of the form $(\ldots, u, v_{n-1}, v_n, w, \ldots)$, then $\sigma\in\mathbb{S}_V(X\setminus e, \overline{X\setminus e})$. Hence, we only need to prove that the terms of the right side of \ref{dovoljno} that do not appear in these two cases cancel each other out. 

    If $\sigma\in\mathbb{S}_V(X/e, \overline{X/e})$ does not correspond to any listing from $\mathbb{S}_V(X, \overline{X})$, then there is a cycle of $\sigma$ of the form $(\ldots, u, e, w, \ldots)$ that is in $\overline{X/e}$. Hence, $(u, e)$ and $(e, w)$ are edges of $\overline{X/e}$. Therefore, $(u, e)$ and $(e, w)$ are not edges of $X/e$. Consequently, $(u, v_{n-1})$ and $(v_n, w)$ are not edges of $X\setminus e$, hence, they are edges of its complement. In other words, from the original cycle, we can obtain the cycle $(\ldots, u, v_{n-1}, v_n, w, \ldots)$ in $\overline{X\setminus e}$, which transforms $\sigma\in\mathbb{S}_V(X/e, \overline{X/e})$ into $\sigma'\in\mathbb{S}_V(X\setminus e, \overline{X\setminus e})$.  Since these cycles are in $\overline{X/e}$ and $\overline{X\setminus e}$, $\varphi(\sigma)=\varphi(\sigma')$ and $p_{\mathrm{Type}(\sigma)}\uparrow=p_{\mathrm{Type}(\sigma')}$, these terms cancel each other out in Equation (\ref{dovoljno}). Similarly, it is easy to see that all the listings from $\mathbb{S}_V(X\setminus e, \overline{X\setminus e})$ that do not correspond to any listing from $\mathbb{S}_V(X, \overline{X})$ are of the form $\sigma'$ for some $\sigma\in\mathbb{S}_V(X/e, \overline{X/e})$, which completes the proof.

\end{proof}

\begin{corollary}
     $W_X$ has integer coefficients in its expansion in power sum, and therefore, in monomial bases.  If $X$ does not have cycles of even length, then the coefficients of $W_X$ in power sum basis are positive integers.
\end{corollary}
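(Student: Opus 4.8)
The plan is to read everything directly off the power sum expansion supplied by Theorem \ref{razvoj}. First I would group the terms of $W_X=\sum_{\sigma\in\mathbb{S}_V(X,\overline{X})}(-1)^{\varphi(\sigma)}p_{\mathrm{Type}(\sigma)}$ according to the set partition they produce, so that the coefficient of a fixed $p_\pi$ is
\[
[p_\pi]W_X=\sum_{\substack{\sigma\in\mathbb{S}_V(X,\overline{X})\\ \mathrm{Type}(\sigma)=\pi}}(-1)^{\varphi(\sigma)}.
\]
This is a finite sum of terms each equal to $\pm 1$, hence an integer, which settles integrality in the power sum basis.

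For the monomial basis I would simply propagate this through the unitriangular $0$--$1$ change of basis. Using Equation (\ref{pprekom}), $p_\pi=\sum_{\pi\leq\sigma}m_\sigma$, and substituting gives
\[
W_X=\sum_\pi\left([p_\pi]W_X\right)\sum_{\pi\leq\sigma}m_\sigma=\sum_\sigma\left(\sum_{\pi\leq\sigma}[p_\pi]W_X\right)m_\sigma.
\]
Since each $[p_\pi]W_X$ is an integer, every coefficient $[m_\sigma]W_X=\sum_{\pi\leq\sigma}[p_\pi]W_X$ is a sum of integers, hence an integer as well.

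The positivity statement is where the hypothesis enters, and it is the only step requiring an actual observation. I would argue that if $X$ has no directed cycle of even length, then $(-1)^{\varphi(\sigma)}=1$ for every $\sigma\in\mathbb{S}_V(X,\overline{X})$. Indeed, $\varphi(\sigma)=\sum_\gamma(\ell(\gamma)-1)$, where $\gamma$ ranges only over those cycles of $\sigma$ that are cycles of $X$. Trivial cycles contribute $\ell(\gamma)-1=0$, while each nontrivial such $\gamma$ is a genuine directed cycle of $X$ and therefore has odd length by hypothesis, making $\ell(\gamma)-1$ even. Thus $\varphi(\sigma)$ is a sum of even numbers and is even, so every sign is $+1$. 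Consequently $[p_\pi]W_X$ equals the number of $\sigma\in\mathbb{S}_V(X,\overline{X})$ with $\mathrm{Type}(\sigma)=\pi$, a nonnegative integer that is strictly positive exactly when such a $\sigma$ exists, i.e. for every $p_\pi$ actually occurring in $W_X$.

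The nearest thing to an obstacle is the bookkeeping inside $\varphi$: one must note that the lengths entering the sum are precisely those of the cycles lying in $X$, so that cycles lying in $\overline{X}$ never contribute and their parities are irrelevant, and that fixed points are harmless because they contribute zero regardless of whether a loop is present. Once this is isolated, the parity argument is immediate and no further computation is needed.
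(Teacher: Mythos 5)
Your proof is correct and is essentially the argument the paper intends: the corollary is stated as an immediate consequence of Theorem \ref{razvoj}, with integrality read off from the $\pm 1$ signs (propagated to the monomial basis via Equation (\ref{pprekom})), and positivity from the parity observation that every nontrivial cycle of $\sigma$ lying in $X$ is a genuine directed cycle of $X$, hence of odd length, forcing $\varphi(\sigma)$ to be even. Your explicit handling of fixed points and of cycles lying in $\overline{X}$, and your reading of ``positive'' as applying to the $p_\pi$ actually occurring in $W_X$, are exactly the right bookkeeping.
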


The proof of the following corrolary is the same as the proof of Corollary \ref{turnir}, so is left out.

\begin{corollary}
    If $X=(V, E)$ is a tournament, then \[W_X=\sum_{\substack{\sigma\in \mathbb{S}_V(X)\\ \textrm{all cycles of }\sigma \textrm{ have odd length}}}2^{\psi(\sigma)}p_{\mathrm{Type}(\sigma)},\]
    where $\psi(\sigma)$ denotes the number of nontrivial cycles of $\sigma$.
\end{corollary}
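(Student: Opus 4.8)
The plan is to start from the power sum expansion in Theorem \ref{razvoj}, namely $W_X=\sum_{\sigma\in\mathbb{S}_V(X,\overline{X})}(-1)^{\varphi(\sigma)}p_{\mathrm{Type}(\sigma)}$, and to simplify it using the tournament hypothesis. The crucial structural fact is that in a tournament every pair of distinct vertices is joined by exactly one directed edge, so a cyclic word $(a_1,\ldots,a_\ell)$ is a cycle of $X$ if and only if its reversal $(a_\ell,\ldots,a_1)$ is a cycle of $\overline{X}$. In particular, each nontrivial cycle of a permutation $\sigma\in\mathbb{S}_V(X,\overline{X})$ lies in exactly one of $X$ and $\overline{X}$, and reversing any single cycle produces another element of $\mathbb{S}_V(X,\overline{X})$ with the same underlying set partition of vertices.

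Next I would cancel all contributions coming from permutations that have an even nontrivial cycle. Using the fixed vertex labeling $v_1,\ldots,v_n$, I would define an involution $\iota$ on the set of such permutations by reversing the even-length nontrivial cycle that contains the smallest-indexed vertex among all vertices lying in even nontrivial cycles. Reversing a cycle leaves the set partition $\mathrm{Type}(\sigma)$ unchanged, so $p_{\mathrm{Type}(\iota(\sigma))}=p_{\mathrm{Type}(\sigma)}$; meanwhile the chosen cycle switches between $X$ and $\overline{X}$, changing $\varphi$ by $\pm(\ell-1)$ with $\ell$ even, hence $(-1)^{\varphi}$ flips sign. Since $\iota$ is a fixed-point-free, sign-reversing, $\mathrm{Type}$-preserving involution, these terms cancel in pairs, and the only surviving permutations are those all of whose nontrivial cycles have odd length.

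For a surviving permutation every nontrivial cycle has odd length, so each such cycle contributes an even number $\ell-1$ to $\varphi$ (when it lies in $X$) or $0$ (when it lies in $\overline{X}$); in either case $(-1)^{\varphi(\sigma)}=1$. I would then collect the survivors by reversing precisely those nontrivial cycles that lie in $\overline{X}$: this sends each survivor to a unique $\tau\in\mathbb{S}_V(X)$ with all nontrivial cycles odd and with $\mathrm{Type}(\tau)=\mathrm{Type}(\sigma)$. Conversely, from such a $\tau$ one recovers every survivor lying over it by independently choosing, for each of its $\psi(\tau)$ nontrivial cycles, whether to keep it (in $X$) or reverse it (into $\overline{X}$); since an odd cycle has length at least $3$, a cycle and its reversal are distinct cyclic words, so these $2^{\psi(\tau)}$ permutations are pairwise distinct. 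Summing the $+1$ contributions then yields exactly $\sum 2^{\psi(\tau)}p_{\mathrm{Type}(\tau)}$ over the stated index set.

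The main obstacle, and the only place where the tournament property is genuinely used, is guaranteeing that cycles may be freely moved between $X$ and $\overline{X}$ by reversal; everything else (the sign bookkeeping through $\varphi$ and the $2^{\psi}$ count) is formally identical to the commutative argument for Corollary \ref{turnir}, with the single additional observation that reversing a cycle preserves the set partition $\mathrm{Type}(\sigma)$ and hence the noncommutative power sum $p_{\mathrm{Type}(\sigma)}$.
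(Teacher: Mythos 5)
Your proof is correct and is essentially the argument the paper intends: the paper omits the proof, stating it is the same as that of Corollary \ref{turnir} from \cite{S}, namely the sign-reversing cycle-reversal involution that cancels permutations with even nontrivial cycles and the $2^{\psi(\sigma)}$ count from the two orientations of each odd nontrivial cycle, with your (correct and necessary) added observation that reversal preserves the set partition $\mathrm{Type}(\sigma)$ and hence the noncommutative $p_{\mathrm{Type}(\sigma)}$. One small detail worth making explicit: your involution is fixed-point-free also because $2$-cycles can never occur in $\mathbb{S}_V(X,\overline{X})$ when $X$ is a tournament (neither orientation condition can hold for a $2$-cycle, since exactly one of $(a,b)$, $(b,a)$ is an edge), as your ``length at least $3$'' remark covers only the odd cycles in the collection step.
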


 Combining the expansion from Theorem \ref{razvoj} with Equations (\ref{pprekom}) and (\ref{pprekoe}) gives us the following.

\begin{corollary} If $X$ is a digraph, then
    \[[m_\pi]W_X=\sum_{\substack{\sigma\in \mathbb{S}_V(X, \overline{X})\\\mathrm{Type}(\sigma)\leq\pi }}(-1)^{\varphi(\sigma)}, \hspace{10mm}[e_{\pi}]W_X=\sum_{\substack{\sigma\in \mathbb{S}_V(X, \overline{X})\\ \pi\leq \mathrm{Type}(\sigma) }}\frac{(-1)^{\varphi(\sigma)}\mu(\pi, \mathrm{Type}(\sigma))}{\mu(\widehat{0}, \mathrm{Type}(\sigma))}.\]
\end{corollary}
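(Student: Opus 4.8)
The plan is to reduce everything to the power-sum expansion from Theorem \ref{razvoj} and then apply the two change-of-basis identities (\ref{pprekom}) and (\ref{pprekoe}), extracting coefficients by linearity. Since the noncommutative power sums $\{p_\tau : \tau\in\Pi_n\}$, the monomials $\{m_\pi\}$, and the elementary functions $\{e_\pi\}$ are each bases of the degree-$n$ component of the algebra, the coefficient operators $[m_\pi](\cdot)$ and $[e_\pi](\cdot)$ are well-defined linear functionals, and it suffices to understand how they act on a single power sum $p_\tau$.

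First I would record the expansion of a single power sum in each of the two target bases. From (\ref{pprekom}), written for $\tau\in\Pi_n$ as $p_\tau=\sum_{\tau\leq\rho}m_\rho$, I read off $[m_\pi]p_\tau=1$ when $\tau\leq\pi$ and $[m_\pi]p_\tau=0$ otherwise. Similarly, from (\ref{pprekoe}) I obtain $[e_\pi]p_\tau=\mu(\pi,\tau)/\mu(\widehat{0},\tau)$ when $\pi\leq\tau$ and $[e_\pi]p_\tau=0$ otherwise. The only subtlety to flag here is that the order relation runs upward in the monomial identity ($\tau\leq\pi$) but downward in the elementary identity ($\pi\leq\tau$), and that the Möbius arguments are taken in the lattice $\Pi_n$.

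Then I would substitute $W_X=\sum_{\sigma\in\mathbb{S}_V(X,\overline{X})}(-1)^{\varphi(\sigma)}p_{\mathrm{Type}(\sigma)}$ from Theorem \ref{razvoj} and apply each functional term by term, taking $\tau=\mathrm{Type}(\sigma)$. For the monomial case only those $\sigma$ with $\mathrm{Type}(\sigma)\leq\pi$ survive, which yields the first formula; for the elementary case only those $\sigma$ with $\pi\leq\mathrm{Type}(\sigma)$ survive, each contributing its Möbius factor, which yields the second. I would note explicitly that distinct permutations $\sigma$ may induce the same set partition $\mathrm{Type}(\sigma)$, so the displayed coefficients are genuinely sums over all qualifying $\sigma$; no merging beyond collecting these contributions is required.

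I do not expect a serious obstacle: the entire content is the linearity of coefficient extraction combined with the two already-established change-of-basis formulas. The one point demanding care is the bookkeeping of the partial-order conditions and the direction of the Möbius arguments, so that the summation ranges in the final two sums and the arguments $\mu(\pi,\mathrm{Type}(\sigma))$ and $\mu(\widehat{0},\mathrm{Type}(\sigma))$ are transcribed correctly from the single-power-sum computation.
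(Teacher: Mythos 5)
Your proposal is correct and follows exactly the route the paper intends: it states the corollary as an immediate consequence of combining the power-sum expansion of Theorem \ref{razvoj} with the change-of-basis identities (\ref{pprekom}) and (\ref{pprekoe}), which is precisely your argument of extracting $[m_\pi]p_\tau$ and $[e_\pi]p_\tau$ and applying linearity. Your explicit care with the direction of the order relations ($\tau\leq\pi$ versus $\pi\leq\tau$) and the observation that distinct $\sigma$ may share the same $\mathrm{Type}(\sigma)$ fill in the bookkeeping the paper leaves implicit.
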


There is a way to obtain a decomposition of $W_X$ analogous to the one from Theorem \ref{razbijanje}. We only need to be more careful while counting the listings.

\begin{lemma}\label{lema}
    If $X=(V, E)$ is a digraph and $F\subseteq E$ such that $(V, F)$ is not a disjoint union of paths and $f: V\rightarrow\mathbb{P}$, then \begin{equation}
        \label{listinzi}
   \#\Sigma_V(f, X)=\sum_{S\subseteq F, S\neq\emptyset}(-1)^{|S|-1}\#\Sigma_V(f, X\setminus S). \end{equation}
\end{lemma}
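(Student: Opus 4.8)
The plan is to run an inclusion–exclusion over the subsets $S\subseteq F$, while keeping track of exactly which edges of $F$ spoil friendliness for a fixed listing. First I would note that for every $S\subseteq F$ one has $\Sigma_V(f,X\setminus S)\subseteq\Sigma_V(f,X\setminus F)$, so I may take $\Omega:=\Sigma_V(f,X\setminus F)$ as a common ambient set. For a listing $\sigma=(\sigma_1,\ldots,\sigma_n)\in\Omega$, the only way it can fail friendliness with a larger edge set is along edges of $F$, so I would define the bad set $B(\sigma)=\{(u,v)\in F:\ (u,v)=(\sigma_j,\sigma_{j+1})\text{ for some }j\text{ and }f(\sigma_j)=f(\sigma_{j+1})\}$, consisting of the edges of $F$ that occur as consecutive forward pairs of $\sigma$ with equal colour. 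The key bookkeeping identity is then $\sigma\in\Sigma_V(f,X\setminus S)\iff B(\sigma)\subseteq S$: since $\sigma$ is already strict across $E\setminus F$, it is friendly with $X\setminus S$ precisely when none of its bad edges survives in $E\setminus S$.

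Next I would substitute $\#\Sigma_V(f,X\setminus S)=\sum_{\sigma\in\Omega}[B(\sigma)\subseteq S]$ into the right-hand side and interchange the two summations, obtaining $\sum_{\sigma\in\Omega}\big(\sum_{B(\sigma)\subseteq S\subseteq F,\,S\neq\emptyset}(-1)^{|S|-1}\big)$. The inner alternating sum is a routine binomial computation depending only on $b=|B(\sigma)|$ and $m=|F|$: writing $S=B(\sigma)\cup T$ with $T\subseteq F\setminus B(\sigma)$, it collapses through a factor $(1-1)^{m-b}$. It evaluates to $1$ when $B(\sigma)=\emptyset$, to $(-1)^{m-1}$ when $B(\sigma)=F$, and to $0$ otherwise. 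Hence the right-hand side equals $\#\{\sigma\in\Omega:B(\sigma)=\emptyset\}+(-1)^{m-1}\#\{\sigma\in\Omega:B(\sigma)=F\}$, and since $B(\sigma)=\emptyset$ characterises friendliness with $X$ itself, the first term is exactly $\#\Sigma_V(f,X)$. (Note $F\neq\emptyset$ by hypothesis, so the computation giving $1$ is valid.)

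The main obstacle, and the only place the hypothesis is used, is showing that the correction term vanishes, i.e. that no $\sigma$ satisfies $B(\sigma)=F$. Here I would argue that $B(\sigma)=F$ would force every edge of $F$ to be a consecutive forward pair $(\sigma_j,\sigma_{j+1})$ of the listing; but the forward consecutive pairs of $\sigma$ are precisely the edges of the directed Hamiltonian path $\sigma_1\to\sigma_2\to\cdots\to\sigma_n$, so $F$ would be contained in the edge set of a directed path, and therefore $(V,F)$ would be a disjoint union of directed paths, contradicting the assumption. Thus $\{\sigma\in\Omega:B(\sigma)=F\}=\emptyset$, the correction term drops out, and the right-hand side reduces to $\#\Sigma_V(f,X)$, as claimed.
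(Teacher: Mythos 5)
Your argument is correct, and it implements the inclusion--exclusion in a genuinely different way from the paper's proof, although both pivot on the same key fact. The paper rewrites (\ref{listinzi}) as an equality between the contributions of even-sized and odd-sized subsets $S\subseteq F$, fixes a listing $\sigma$, and uses the hypothesis to produce an edge $e\in F$ that is not a consecutive pair of $\sigma$; toggling $e$ then gives a sign-reversing involution $S\mapsto S'$ on subsets of $F$ that does not change whether $\sigma\in\Sigma_V(f,X\setminus S)$, so the two sides match listing by listing with no computation at all. You instead pass to the ambient set $\Omega=\Sigma_V(f,X\setminus F)$, encode membership through the equivalence $\sigma\in\Sigma_V(f,X\setminus S)\iff B(\sigma)\subseteq S$ (which is right: $S\subseteq F$ gives $E\setminus F\subseteq E\setminus S$, so strictness can only fail along $F$), interchange summations, and collapse the inner alternating sum via $(1-1)^{|F|-|B(\sigma)|}$, leaving the single correction term $(-1)^{|F|-1}\#\{\sigma\in\Omega : B(\sigma)=F\}$. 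Both proofs rest on the observation that the consecutive pairs of a listing form a directed Hamiltonian path, so if $(V,F)$ is not a disjoint union of paths then no listing hosts all of $F$ as consecutive pairs --- but the paper uses this to manufacture the toggling edge for each $\sigma$, while you use it once, to annihilate the correction term. Your version buys slightly more: it makes transparent exactly what the right-hand side computes when the hypothesis fails, namely $\#\Sigma_V(f,X)$ plus an explicit error term counting listings that realize all of $F$ consecutively with constant colour along its edges; the paper's involution, in exchange, is shorter and avoids all binomial bookkeeping. Your parenthetical that the hypothesis forces $F\neq\emptyset$ is a necessary and correctly placed check (it validates the value $1$ in the $B(\sigma)=\emptyset$ case and keeps the cases $B(\sigma)=\emptyset$ and $B(\sigma)=F$ disjoint), and your argument also handles loops in $F$ without change, since a loop can never be a consecutive pair and so never lies in $B(\sigma)$.
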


\begin{proof}
    Since the term on the left side of Equation (\ref{listinzi}) corresponds to $X\setminus S$ for $S=\emptyset$, the statement of this lemma is equivalent to \begin{equation}\label{ekvivalentno}
        \sum_{\substack{S\subseteq F\\2\mid |S|}}\#\Sigma_V(f, X\setminus S)=\sum_{\substack{S\subseteq F\\2\nmid |S|}}\#\Sigma_V(f, X\setminus S). \end{equation}
    Let $\sigma=(\sigma_1, \sigma_2, \ldots, \sigma_n)$ be a listing of $V$. Since $(X, F)$ is not a disjoint union of paths, there is an edge $e$ in $F$ such that $(\sigma_i, \sigma_{i+1})\neq e$ for every $i\in[n-1]$. For $S\subseteq F$, let $S'$ denote $S\setminus e$ if $e\in S$ and $S\cup\{e\}$ if $e\notin S$. This is a self-inverse bijection between the subsets of $F$ with an even number of elements and the subsets of $F$ with an odd number of elements. If $\sigma$ appears in the terms of the left side of Equation (\ref{ekvivalentno}) that correspond to $S_1, S_2, \ldots, S_k$, then $\sigma$ appears in the terms of the right side of 
 Equation (\ref{ekvivalentno}) that correspond to $S_1', S_2', \ldots, S_k'$ and vice-versa, which completes the proof.
\end{proof}

\begin{theorem}
    If $X=(V, E)$ is a labeled digraph that is not a disjoint union of paths, then\begin{equation} \label{podskupovi}
        W_X=\sum_{S\subseteq E, S\neq \emptyset}(-1)^{|S|-1}W_{X\setminus S}.\end{equation}
\end{theorem}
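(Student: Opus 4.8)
The plan is to reduce the claimed identity to Lemma \ref{lema} by extracting, for each coloring $f$, the coefficient of a single monomial on both sides. First I would rewrite the definition of $W_X$ in a form that isolates the role of the colorings. Since the monomial $x_{f(v_1)}x_{f(v_2)}\cdots x_{f(v_n)}$ does not depend on $\sigma$, the inner sum $\sum_{\sigma\in\Sigma_V}\delta_f(\sigma)$ merely counts the $(f,X)$-friendly listings, so that
\[W_X = \sum_{f:V\to\mathbb{P}} \#\Sigma_V(f,X)\, x_{f(v_1)}x_{f(v_2)}\cdots x_{f(v_n)}.\]
The crucial observation, and the reason the noncommutative setting is used here, is that because the variables do not commute and the positions $1,\dots,n$ are pinned to the fixed vertex order $v_1,\dots,v_n$, two distinct colorings produce two distinct monomials. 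Hence the coefficient of $x_{f(v_1)}\cdots x_{f(v_n)}$ in $W_X$ is exactly $\#\Sigma_V(f,X)$, with no interference between different colorings.

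Next I would compare the two sides of the desired equation monomial by monomial. Deleting edges does not alter the vertex set or its labeling, so $X\setminus S$ carries the same labeled vertices $v_1,\dots,v_n$ as $X$, and the monomial $x_{f(v_1)}\cdots x_{f(v_n)}$ has the same meaning in $W_{X\setminus S}$. Extracting its coefficient from the right-hand side therefore gives
\[\sum_{\substack{S\subseteq E\\ S\neq\emptyset}} (-1)^{|S|-1}\,\#\Sigma_V(f, X\setminus S),\]
which I would recognize as exactly the right-hand side of Equation (\ref{listinzi}) with $F=E$.

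Finally I would invoke Lemma \ref{lema} with $F=E$. The hypothesis of the theorem, that $X=(V,E)$ is not a disjoint union of paths, is precisely the hypothesis the lemma requires for the choice $F=E$. The lemma then identifies the displayed alternating sum with $\#\Sigma_V(f,X)$, matching the coefficient on the left. Since this holds for every coloring $f$, and since distinct colorings contribute distinct monomials, the two sides agree coefficientwise and the identity follows.

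The step carrying the most weight is already discharged by Lemma \ref{lema}; what remains for me to get right is the coefficient-extraction bookkeeping, namely verifying that distinct colorings index distinct monomials and that edge deletion leaves the labeling (hence the monomial indexing) intact, so that Lemma \ref{lema} may be applied termwise for each fixed $f$. This is routine, but it is the only place where genuine care is needed.
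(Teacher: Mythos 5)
Your proposal is correct and matches the paper's intended argument exactly: the paper states the theorem without a written proof precisely because it follows from Lemma \ref{lema} with $F=E$ by comparing coefficients of $x_{f(v_1)}\cdots x_{f(v_n)}$ for each coloring $f$, which is the coefficient-extraction bookkeeping you carry out. Your added verification that distinct colorings yield distinct monomials in the noncommutative setting, and that deletion preserves the labeling, is the right (and only) detail to check.
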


Hence, the Redei-Berge function of any digraph that is not a disjoint union of paths can be expressed as a linear combination of the Redei-Berge functions of appropriate subdigraphs. The same expansion can be applied again to any digraph appearing on the right side of Equation (\ref{podskupovi}) that is not a disjoint union of paths. If we continue with this procedure, we will express the Redei-Berge function of the original digraph as a linear combination of the Redei-Berge functions of its spanning subdigraphs that are disjoint unions of paths. 
Note that we did not use the deletion-contraction property to prove this theorem since some complications dealing with the edges that vanish in $X/e$ might appear. If there is a cycle in $X$, Lemma \ref{lema} gives a generalization of Theorem \ref{ciklus}.

\begin{theorem}
    If $e_1, e_2, \ldots, e_k$ is a list of edges that form a directed cycle in a digraph $X=(V, E)$, then \[W_X=\sum_{\substack{S\subseteq \{e_1, e_2, \ldots, e_k\} \\ S\neq\emptyset}}(-1)^{|S|-1}W_{X\setminus S}. \]
\end{theorem}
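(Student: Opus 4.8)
The plan is to deduce the statement directly from Lemma~\ref{lema}, in exactly the same manner as the preceding subset-decomposition theorem, by taking $F=\{e_1,e_2,\ldots,e_k\}$. First I would observe that since $e_1,\ldots,e_k$ form a directed cycle, the digraph $(V,F)$ contains that cycle and therefore cannot be a disjoint union of paths; hence the hypothesis of Lemma~\ref{lema} is satisfied for this particular $F$. This is the only point at which the cyclic structure of the $e_i$ is used, and the verification is immediate, so I do not expect a genuine obstacle here. Everything else is bookkeeping.

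Next I would unpack the definition of $W_X$. Because the monomial $x_{f(v_1)}x_{f(v_2)}\cdots x_{f(v_n)}$ depends only on the coloring $f$ and not on the listing $\sigma$, summing the indicator $\delta_f(\sigma)$ over all $\sigma\in\Sigma_V$ simply counts the $(f,X)$-friendly listings. Thus
\[
W_X=\sum_{f:V\rightarrow\mathbb{P}}\#\Sigma_V(f,X)\,x_{f(v_1)}x_{f(v_2)}\cdots x_{f(v_n)},
\]
and the identical rewriting holds with $X$ replaced by any deletion $X\setminus S$.

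Finally I would apply Lemma~\ref{lema} coloring by coloring, substituting
\[
\#\Sigma_V(f,X)=\sum_{S\subseteq F,\,S\neq\emptyset}(-1)^{|S|-1}\#\Sigma_V(f,X\setminus S)
\]
into the expression above, and then interchange the summations over $f$ and over $S$. Collecting, for each nonempty $S\subseteq F$, the terms $\sum_{f}\#\Sigma_V(f,X\setminus S)\,x_{f(v_1)}\cdots x_{f(v_n)}$ recovers exactly $W_{X\setminus S}$, which gives the claimed formula. The argument is essentially mechanical once Lemma~\ref{lema} is available; the only thing warranting a moment's care is the legitimacy of the interchange of sums, which is unproblematic since $W_X$ has bounded degree and each fixed monomial receives contributions from only finitely many colorings $f$.
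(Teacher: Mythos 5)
Your proposal is correct and matches the paper's own route: the paper likewise derives this theorem directly from Lemma~\ref{lema} with $F=\{e_1,\ldots,e_k\}$, noting that a digraph containing a directed cycle cannot be a disjoint union of paths, and then sums the coloring-by-coloring identity over all $f$ exactly as you do. Your extra remarks on rewriting $W_X$ via $\#\Sigma_V(f,X)$ and on the interchange of summations are sound and just make explicit what the paper leaves implicit.
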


\begin{example}
   If $e_1, e_2, e_3$ is a list of edges that form a triangle in a digraph $X=(V, E)$, then \[W_X=W_{X\setminus \{e_1\}}+W_{X\setminus\{ e_2\} }+W_{X\setminus\{ e_3\} }-W_{X\setminus\{ e_1, e_2\} }-W_{X\setminus\{ e_2, e_3\} }-W_{X\setminus\{ e_3, e_1\} }+W_{X\setminus\{ e_1, e_2, e_3\} }.\]
\end{example}
%% if you use biblatex then this generates the bibliography
%% if you use some other method then remove this and do it your own way

%% \section{}
%% \label{}

%% References
%% Following citation commands can be used in the body text:
%% Usage of \cite is as follows:
%%   \cite{key}          ==>>  [#]
%%   \cite[chap. 2]{key} ==>>  [#, chap. 2]
%%   \citet{key}         ==>>  Author [#]

%% References with bibTeX database:

\bibliographystyle{model1a-num-names}
\bibliography{<your-bib-database>}

%% Authors are advised to submit their bibtex database files. They are
%% requested to list a bibtex style file in the manuscript if they do
%% not want to use model1a-num-names.bst.

%% References without bibTeX database:

%\begin{thebibliography}{00}

%% \bibitem must have the following form:
%%   \bibitem{key}...
%%

% \bibitem{}

%\end{thebibliography}

\end{document}